\title{Mean Residual Life Ageing Intensity Function}
\author{
    Ashutosh Singh$^{1}$, Ishapathik Das$^{1}$\textsuperscript{\thanks{Corresponding author \texttt{email: ishapathik@iittp.ac.in}}}, Asok Kumar Nanda$^{2}$, Sumen Sen$^{3}$\\
    \small $^1$Indian Institute of Technology Tirupati, India\\
    \small $^2$Indian Institute of Science Education and Research, Kolkata, India\\
    \small $^3$Austin Peay State University, Clarksville, TN, USA
}
\date{} 
\newtheorem{theorem}{Theorem}[section]
\newtheorem{lemma}[theorem]{Lemma}
\newtheorem{definition}{Definition}[section]
\newtheorem{example}{Counterexample}[section]
\newtheorem{eg}{Example}[section]
\newtheorem{result}{Result}[section]
\begin{document}

\maketitle


\begin{abstract}

The ageing intensity function is a powerful analytical tool that provides valuable insights into the ageing process across diverse domains such as reliability engineering, actuarial science, and healthcare. Its applications continue to expand as researchers delve deeper into understanding the complex dynamics of ageing and its implications for society. One common approach to defining the ageing intensity function is through the hazard rate or failure rate function, extensively explored in scholarly literature. Equally significant to the hazard rate function is the mean residual life function, which plays a crucial role in analyzing the ageing patterns exhibited by units or components. This article introduces the mean residual life ageing intensity (MRLAI) function to delve into component ageing behaviours across various distributions. Additionally, we scrutinize the closure properties of the MRLAI function across different reliability operations. Furthermore, a new order termed the mean residual life ageing intensity order is defined to analyze the ageing behaviour of a system, and the closure property of this order under various reliability operations is discussed.

\end{abstract}
\textbf{Keywords: }  Ageing intensity function, Coherent systems, Mean residual life, Stochastic orders, Survival functions.
\section{ Introduction}
In reliability theory, a pivotal concept is \enquote{ageing,} which denotes an intrinsic attribute of a unit, whether it be a living organism or a system comprising various components. The phenomenon of ageing within items and compound structures constitutes a significant and complex subject within lifetime analysis, affecting numerous systems and their constituent parts. This phenomenon has been extensively examined in the realm of reliability theory. The characteristics of random lifetimes are typically elucidated through their respective distribution, survival, and failure rate (hazard rate) functions. In the context of lifetime analysis, ageing does not signify the advancement of a statistical unit in terms of chronological time; instead, it pertains to the behavior of residual life. Ageing, therefore, encapsulates the phenomenon whereby a unit that is chronologically older exhibits a statistically shorter residual life than a newer or chronologically younger unit. Lifetime distributions are predominantly delineated, concerning ageing, by the behavior of their hazard rate function $r_{X}(t)$ or their mean residual life $\mu_{X}(t)$.



The ageing intensity function is a relatively
new concept that can be also used in lifetime analysis. Literature offers a range of ageing intensity functions utilized for assessing different facets of ageing propensity. Jiang, Ji, and Xiao [2003] \cite{jiang2003aging}  have observed that a unimodal failure rate can be interpreted effectively as either roughly decreasing, increasing, or remaining roughly constant. In their study, they introduced a quantitative measure known as the Ageing Intensity (AI) function, which is defined as the ratio of the failure rate to a baseline failure rate. Nanda et al.[2007] \cite{nanda2007properties} studies the various properties of this againg intensity function. Subarna Bhattacharjee et al. [2013] \cite{bhattacharjee2013reliability}  investigated the characteristics of various generalized Weibull models and system properties concerning AI function. Their research highlights the significant and novel contribution of AI function in analyzing the ageing behavior of systems from a reliability standpoint. Magdalena Szymkowiak (2018) \cite{szymkowiak2018characterizations} described a method for characterizing univariate positive absolutely continuous random variables using the ageing intensity function. Again, Magdalena Szymkowiak (2018) \cite{szymkowiak2020g} introduced and examined a set of generalized ageing intensity functions. These functions are used to define the lifetime distributions of univariate positive absolutely continuous random variables. S. M. Sunoj et al. (2018) studied a quantile-based ageing intensity function and investigated its diverse ageing characteristics. Szymkowiak (2019) \cite{szymkowiak2019measures} introduced and studied a family of generalized ageing intensity functions for univariate absolutely continuous lifetime random variables. These functions facilitate the analysis and assessment of ageing tendencies from multiple perspectives. 

Sunoj et al. (2020) \cite{sunoj2020ageing} further expand this ageing intensity function to include conditionally specified and conditional survival models. Giri et al. (2021) \cite{giri2023ageing} determined the AI function for contemporary continuous Weibull distributions.In 2021, Buono et al. \cite{buono2021generalized}  introduced and studied a family of generalized reversed ageing intensity functions. These functions are contingent upon a real parameter. When this parameter is positive, they distinctly characterize the distribution functions of univariate positive absolutely continuous random variables. Conversely, when the parameter is negative, they delineate families of distributions. Bhattacharjee et al. (2022) \cite{bhattacharjee2022properties} examined the ageing classes by considering the monotonicity of new functions, namely the arithmetic mean, geometric mean, and harmonic mean of the AI function. Additionally, they discuss the relationships between these functions and the classical AI function.Francesco Buono (2022) \cite{buono2022multivariate} extended the concept of ageing intensity function to the multivariate scenario through the utilization of multivariate conditional hazard rate functions. Various properties of these functions are examined, with particular emphasis placed on the bivariate case.\\

In the realm of component or system ageing analysis, both the hazard rate and mean residual life function play pivotal roles as indispensable metrics. While the literature extensively covers the ageing intensity function derived from the hazard rate function, its counterpart based on the mean residual life has remained unexplored until now. This article introduces a novel concept, referred to as \enquote{Mean Residual Life Ageing Intensity (MRLAI) function}, aimed at providing a quantitative assessment of ageing properties. The manuscript delves into various properties and results associated with these new measures, shedding light on their potential implications.\\

This paper is organized as follows: In Section 2, the MRLAI function for various distributions is presented, the ageing class based on the MRLAI function is introduced. In Section 3,  the closure properties of this ageing class are studied. In Section 4, the MRLAI ordering is defined, and its properties are discussed. The closure of the MRLAI ordering under different reliability operations is also studied.
\section{Mean Residual Life Ageing Intensity (MRLAI) Function} 
Jiang, Ji, and Xiao (2003) \cite{jiang2003aging} defined the ageing intensity function based on the failure rate function as the ratio of the failure rate function to the average of that function up to the time $t$. We have discussed in the previous section that both the failure rate function and mean residual life function are equally important for analyzing the ageing properties of a component. Here, we use the mean residual life function to define the ageing intensity function and analyze the ageing properties of lifetime data.
\begin{definition}
    For a non-negative random variable $X$, the Mean Residual Life Ageing Intensity (MRLAI) function is defined as 
\begin{equation*}
    L_X^{\mu}(t)=\frac{\mu_X(t)}{\frac1{t}\int_0^t\mu_X(u)du},~~\text{ for $0<t<\infty$},
\end{equation*}
where $\mu_{X}(t)$ denotes the mean residual life, defined as the expected additional lifetime of a component given that the component has survived till time t, given by 
\[\mu_{X}(t) = E[X - t | X > t] = E[X  | X > t] - t.\]
\end{definition}
\hspace{1cm}

Let $\bar{F}_{X}(t)=1-F_X(t)$ denote the reliability function. The relationship between the MRL function and the reliability function $\bar{F}_{X}(t)$ can be expressed as (\cite{gupta2003representing})
\[ \mu_{X}(t) = \dfrac{\int_{t}^{\infty}\bar{F}_{X}(x) dx}{\bar{F}_{X}(t) }.\]
Then the MRLAI function of a random variable $X$, can also be represented as
\begin{eqnarray*}L_X^{\mu}(t) &=& \frac{\mu_{X}(t)}{\frac{1}{t} .\int_{0}^{t} \,\mu_{X}(u)du},\\
&=&  \frac{t.\frac{1}{\bar{F}(t)}\int_{t}^{\infty}\bar{F}(u) du}{\int_{0}^{t} (\frac{1}{\bar{F}(u)}\int_{u}^{\infty} \bar{F}(z) dz) du},
\end{eqnarray*}
for $0<t<\infty$.
The larger the value of $L_X^{\mu}(t) $, the weaker is the tendency of ageing of the random variable X. 

\subsection{Properties of MRLAI}
Here, we study some properties of MRLAI function for a non-negative random variable $X$.
\begin{theorem}
  Let $L_X^{\mu}(t) = 1$ for all $t > 0$ if and only if the mean residual life function $\mu_{X}(t)$ is a constant function.   
\end{theorem}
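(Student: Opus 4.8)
The plan is to recognize that the condition $L_X^{\mu}(t) = 1$ for all $t > 0$ is, by the very definition of the MRLAI function, equivalent to the integral identity
\[
\mu_X(t) = \frac{1}{t}\int_0^t \mu_X(u)\,du, \quad \text{i.e.,} \quad t\,\mu_X(t) = \int_0^t \mu_X(u)\,du,
\]
holding for every $t > 0$. The backward (``if'') implication is then immediate: substituting a constant $\mu_X(t) \equiv c$ into the averaged denominator gives $\frac{1}{t}\int_0^t c\,du = c$, so the ratio equals $1$. Thus the real content lies in the forward (``only if'') direction.

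For the forward direction, I would set $M(t) = \int_0^t \mu_X(u)\,du$ and observe that, since the mean residual life function of a non-negative random variable with finite mean is continuous, $M$ is continuously differentiable with $M'(t) = \mu_X(t)$. The identity $t\,\mu_X(t) = M(t)$ then reads as the first-order linear ODE $t\,M'(t) = M(t)$, equivalently $M'(t)/M(t) = 1/t$ on the region where $M(t) > 0$. Integrating yields $M(t) = K\,t$ for some constant $K > 0$, and differentiating back gives $\mu_X(t) = M'(t) = K$, a constant. Equivalently and more directly, differentiating the identity $t\,\mu_X(t) = \int_0^t \mu_X(u)\,du$ with respect to $t$ produces $\mu_X(t) + t\,\mu_X'(t) = \mu_X(t)$, so that $t\,\mu_X'(t) = 0$ and hence $\mu_X'(t) = 0$ for all $t > 0$, forcing $\mu_X$ to be constant.

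I expect the main obstacle to be the regularity justification rather than the algebra: to differentiate the integral equation I must know that $\mu_X$ is differentiable, or at least that both sides are differentiable and the fundamental theorem of calculus applies. The cleanest route is to lean on the standard representation $\mu_X(t) = \frac{1}{\bar{F}_X(t)}\int_t^{\infty} \bar{F}_X(u)\,du$ stated earlier, from which the continuity of $\mu_X$—and, for absolutely continuous lifetimes, its differentiability—follows; this also legitimizes $M'(t) = \mu_X(t)$. I would additionally note the boundary behaviour: as $t \to 0^+$ the averaged denominator tends to $\mu_X(0^+) = E[X]$, so $L_X^{\mu}(0^+) = 1$ holds automatically and no separate treatment near the origin is required. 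With these regularity points secured, the ODE or direct differentiation step closes the argument.
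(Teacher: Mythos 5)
Your proposal is correct and follows essentially the same route as the paper: differentiate the identity $t\,\mu_X(t)=\int_0^t\mu_X(u)\,du$ to obtain $t\,\mu_X'(t)=0$, and verify the converse by direct substitution of a constant $\mu_X$. The additional remarks on regularity and the boundary behaviour at $t\to 0^+$ are a welcome refinement but do not change the argument.
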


\begin{proof}
Let $L_X^{\mu}(t) = 1 \iff L_X^{\mu}(t) = \frac{\mu_{X}(t)}{\frac{1}{t} \cdot \int_{0}^{t} \mu_{X}(u) \, du} = 1$.\\
\begin{align*}
&\iff \int_{0}^{t} \mu_{X}(u) \, du = t \cdot \mu_{X}(t) \hspace{0.5cm} \forall t \\
&\iff \frac{d}{dt} \int_{0}^{t} \mu_{X}(u) \, du = \frac{d}{dt}(t \cdot \mu_{X}(t)) \\
&\iff \mu_{X}(t) = \mu_{X}(t) + t \cdot \mu_{X}'(t) \\
&\iff t \cdot \mu_{X}'(t) = 0 \\
&\iff \mu_{X}'(t) = 0 \\
&\iff \mu_{X}(t) = \lambda
\end{align*}

where $\lambda$ is a constant.\\
Conversely, let $\mu_{X}(t) = \lambda$, then $L_X^{\mu}(t) = \dfrac{t \cdot \lambda}{t \cdot \lambda} = 1$.
\end{proof}

\begin{theorem}
 If $\mu_{X}(t) $ is an increasing function in $t \geq 0$ then  $L_X^{\mu}(t) > 1$ but the converse is not true i.e. if $L_X^{\mu}(t) > 1$   then $\mu_{X}(t)$ may not be a increasing function in t .   
\end{theorem}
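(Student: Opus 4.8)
The plan is to reduce both assertions to the sign of the auxiliary function
\[ g(t) = t\,\mu_X(t) - \int_0^t \mu_X(u)\,du, \]
since $L_X^{\mu}(t) > 1$ is equivalent to $t\,\mu_X(t) > \int_0^t \mu_X(u)\,du$, that is, to $g(t) > 0$. For the forward implication I would argue exactly as in Theorem~2.1: observe that $g(0) = 0$ and differentiate to get $g'(t) = \mu_X(t) + t\,\mu_X'(t) - \mu_X(t) = t\,\mu_X'(t)$. If $\mu_X$ is increasing then $\mu_X'(t) \ge 0$, so $g'(t) \ge 0$ on $(0,\infty)$; hence $g$ is nondecreasing and $g(t) \ge g(0) = 0$, with strict inequality as soon as $\mu_X$ strictly increases on a subinterval of $(0,t)$. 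For a monotone but non-differentiable $\mu_X$ the same conclusion follows directly from $\mu_X(u) < \mu_X(t)$ for $u < t$, which upon integrating yields $\int_0^t \mu_X(u)\,du < t\,\mu_X(t)$. Either way $L_X^{\mu}(t) > 1$.

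For the converse I would produce an explicit counterexample: a nonnegative random variable whose mean residual life is \emph{not} monotone yet still satisfies $g(t) > 0$ for every $t > 0$. Rather than starting from a distribution, it is cleaner to prescribe $\mu_X$ directly and invoke the standard characterization of mean residual life functions (continuity, positivity, $\mu_X(t) + t$ nondecreasing, and $\int_0^{\infty} du/\mu_X(u) = \infty$) to guarantee that some genuine $X$ realizes it. A convenient family is an increasing affine trend perturbed by a small bounded oscillation, for instance $\mu_X(t) = 1 + \tfrac12 t + \tfrac35 \sin t$: here $\mu_X'(t) = \tfrac12 + \tfrac35 \cos t$ is negative near $t = \pi$, so $\mu_X$ is not increasing, while $\mu_X'(t) + 1 > 0$ keeps $\mu_X(t)+t$ increasing, $\mu_X > 0$, and $\int du/\mu_X = \infty$, so it is a legitimate mean residual life. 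Computing $g(t) = \int_0^t u\,\mu_X'(u)\,du = \tfrac14 t^2 + \tfrac35\bigl(t\sin t + \cos t - 1\bigr)$ and checking that the quadratic term dominates the bounded oscillatory term shows $g(t) > 0$ for all $t > 0$, hence $L_X^{\mu}(t) > 1$ throughout even though $\mu_X$ fails to be increasing.

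The forward direction is routine; the genuine difficulty lies entirely in the converse, namely reconciling $g(t) > 0$ for \emph{all} $t$ with a real decrease of $\mu_X$. The subtlety is that $g'(t) = t\,\mu_X'(t)$ weights later behaviour more heavily, so a dip occurring at large $t$ is strongly penalised, and, more insidiously, a bump that rises and then returns to its former level permanently inflates the running average $\tfrac1t\int_0^t \mu_X(u)\,du$, eventually forcing $L_X^{\mu}(t) < 1$. The counterexample must therefore avoid any \enquote{return to baseline}: it needs a persistent increasing trend strong enough that the running average never overtakes $\mu_X(t)$, with the non-monotonicity confined to a controlled oscillation of small amplitude. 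The step I would treat most carefully is precisely the inequality $g(t) > 0$ for every $t$, where the quadratic growth term must be shown to dominate the oscillatory contribution uniformly in $t$ (a crude bound such as $g(t) \ge \tfrac14 t^2 - \tfrac35 t - \tfrac65$ settles the large-$t$ range, leaving a bounded interval to check directly).
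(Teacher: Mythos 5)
Your argument is correct, and it is considerably more complete than what the paper offers: the paper dismisses the forward implication as ``easy, and it's omitted'' and promises a counterexample for the converse, but the only counterexample actually displayed in that part of the paper (the piecewise $\mu_X$ with $L_X^{\mu}(t)<1$) addresses the converse of the companion \emph{decreasing} statement, not of this one. Your reduction to $g(t)=t\,\mu_X(t)-\int_0^t\mu_X(u)\,du$ with $g(0)=0$ and $g'(t)=t\,\mu_X'(t)$ is exactly the computation underlying the paper's Theorem~2.1, so the forward direction matches the paper's implicit intent; and your explicit construction $\mu_X(t)=1+\tfrac12 t+\tfrac35\sin t$ supplies the missing converse counterexample. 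I checked the details: the characterization conditions for a valid mean residual life hold ($\mu_X\ge \tfrac25>0$, $(\mu_X(t)+t)'\ge \tfrac{9}{10}>0$, $\int_0^\infty du/\mu_X(u)=\infty$), $\mu_X'(\pi)=\tfrac12-\tfrac35<0$ so $\mu_X$ is not increasing, and $g(t)=\tfrac14 t^2+\tfrac35(t\sin t+\cos t-1)$ is indeed strictly positive for all $t>0$ (your crude bound $g(t)\ge \tfrac14t^2-\tfrac35 t-\tfrac65$ settles $t\gtrsim 3.7$, and the local minimum near $t\approx 3.73$, where $\cos t=-\tfrac56$, gives $g\approx 1.14>0$). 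The one point worth stating explicitly is the strictness issue you already gesture at: if $\mu_X$ is merely nondecreasing (e.g.\ constant on an initial interval), one only gets $L_X^{\mu}(t)\ge 1$, so the theorem's strict inequality requires $\mu_X$ to increase strictly somewhere in $(0,t)$; the paper glosses over this as well.
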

\begin{proof}
   The proof is easy, and it's omitted. To show the converse part, we will give a counterexample.
\end{proof}
 
\begin{theorem}
  If $\mu_{X}(t) $ is a decreasing function in $t \geq 0$ then  $L_X^{\mu}(t) < 1$ but the converse is not true, i.e. if $L_X^{\mu}(t) < 1$, then $\mu_{X}(t)$ may not be a decreasing function in $t$. 
\end{theorem}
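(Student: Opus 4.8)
The plan is to mirror the (omitted) argument behind Theorem~2.2, reversing the direction of monotonicity. By the very definition of the MRLAI function, the claim $L_X^{\mu}(t) < 1$ is equivalent to the integral inequality
\[
t\,\mu_X(t) < \int_0^t \mu_X(u)\,du,
\]
so the forward implication reduces to a pointwise comparison. Fix $t>0$. Since $\mu_X$ is decreasing, $\mu_X(u) \ge \mu_X(t)$ for every $u \in (0,t)$; integrating this bound over $(0,t)$ gives $\int_0^t \mu_X(u)\,du \ge t\,\mu_X(t)$, hence $L_X^{\mu}(t) \le 1$ at once. To upgrade to the strict inequality, I would use that a genuinely decreasing $\mu_X$ satisfies $\mu_X(u) > \mu_X(t)$ on a subinterval of $(0,t)$ of positive length; integrating the strict bound there and the weak bound elsewhere yields $\int_0^t \mu_X(u)\,du > t\,\mu_X(t)$, i.e.\ $L_X^{\mu}(t)<1$. (Here I read ``decreasing'' in the strict sense; under only weak monotonicity one gets $L_X^{\mu}(t)\le 1$, with equality exactly where $\mu_X$ is locally constant, a caveat I would flag.)

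A slightly more structural route, which I would mention because it also explains the failure of the converse, is to examine the running average $g(t)=\frac{1}{t}\int_0^t \mu_X(u)\,du$. A direct computation gives
\[
g'(t) = \frac{1}{t}\bigl(\mu_X(t) - g(t)\bigr),
\]
and the comparison above shows $g(t)\ge \mu_X(t)$ when $\mu_X$ is decreasing, so $L_X^{\mu}(t)=\mu_X(t)/g(t)\le 1$. The key qualitative point is that $g$ carries the ``memory'' of the large early values of $\mu_X$: once accumulated, that average can remain above $\mu_X$ even after $\mu_X$ has stopped decreasing, which is precisely the mechanism that lets $L_X^{\mu}<1$ hold without monotonicity.

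For the converse, as in Theorem~2.2, I would exhibit a counterexample rather than prove an implication: a bona fide mean residual life function $\mu_X$ that is \emph{not} decreasing yet satisfies $L_X^{\mu}(t)<1$ for all $t>0$. I would take $\mu_X$ to fall steeply on an initial interval and then rise gently (a non-monotone but admissible profile), and verify that $g(t)$ stays strictly above $\mu_X(t)$ throughout. The main obstacle is \emph{admissibility}: any candidate $\mu_X$ must be a legitimate MRL, meaning it is nonnegative with $\mu_X(t)+t$ nondecreasing (equivalently $\mu_X'(t)\ge -1$), so that the reconstruction
\[
\bar F_X(t) = \frac{\mu_X(0)}{\mu_X(t)}\exp\!\left(-\int_0^t \frac{du}{\mu_X(u)}\right)
\]
defines a genuine survival function. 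Checking this admissibility together with the uniform strict inequality $L_X^{\mu}<1$ is where the bulk of the care is needed, since the two requirements pull the construction in opposite directions.
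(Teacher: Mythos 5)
Your proposal is correct and follows essentially the same route as the paper: the forward implication is the elementary integration argument the paper omits as ``easy,'' and the converse is handled by a counterexample whose profile (an admissible mean residual life that first decreases and then rises, with the running average retaining the memory of the early large values) is exactly the shape of the paper's Counterexample~2.1, namely $\mu_X(t)=1-0.4e^{t-1}$ on $[0,1)$, $0.6t$ on $[1,2)$, and $1.2$ thereafter. The only thing left undone is writing out such an explicit $\mu_X$ and verifying $L_X^{\mu}<1$ for it, but your sketch correctly identifies all the constraints (including admissibility, $\mu_X'\ge -1$) that the construction must satisfy.
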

\begin{proof}
    The proof is easy, and it's omitted. For the converse part, we will give a counterexample. 
\end{proof}
\begin{example}
  Suppose the mean residual life  is given by
  \[\mu_{X}(t) = \begin{cases}
      1 - 0.4e^{t-1}, & 0 \leq t < 1\\
      0.6t, & 1 \leq t < 2\\
      1.2, & t\geq 2
  \end{cases}\]
  Then the MRLAI function is given by
  \[L_{X}^{\mu}(t) = \begin{cases}
    \dfrac{2\mathrm{e}^t-5\mathrm{e}}{2\mathrm{e}^t-5\mathrm{e}t-2},  &  0 \leq t < 1\\
 \dfrac{6\mathrm{e}t^2}{3\mathrm{e}\cdot\left(t^2+1\right)+4},  &  1 \leq t < 2\\
    \dfrac{12\mathrm{e}t}{3\mathrm{e}\cdot\left(4t-3\right)+4},  &   t\geq 2
  \end{cases}\]
  It can be verified that $\mu_{X}(t) $ is not a decreasing function in $t$ but $L_{X}^{\mu}(t) < 1$.
\end{example}

\begin{theorem}[\textbf{Cox (1962)\cite{cox1962renewal},  Meilijson (1972)\cite{meilijson1972limiting}}]
  If $F(0) = 0 $  and let $F$ be right continuous, then the relationship between the reliability function and the MRL function is 
  \[ \bar{F}(t) = \begin{cases} 
       \frac{\mu_{X}(0)}{\mu_{X}(t)} e^{-\int_{0}^{t} \frac{1}{\mu_{X}(u)} du} & 0 \leq t < F^{-1}(1) \\
       0 &   F^{-1}(1) \leq t < \infty
   \end{cases} \]
    where $F^{-1}(1) = sup{\{t|F(t) < 1\}}$.
\end{theorem}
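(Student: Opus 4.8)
The plan is to invert the representation $\mu_{X}(t) = \left(\int_{t}^{\infty}\bar{F}(x)\,dx\right)/\bar{F}(t)$, stated earlier in the excerpt, by turning it into a first-order differential equation for the numerator. Writing $m(t) = \int_{t}^{\infty}\bar{F}(x)\,dx$, I would first record the two facts that drive the argument: $\mu_{X}(t) = m(t)/\bar{F}(t)$, and $m'(t) = -\bar{F}(t)$ by the fundamental theorem of calculus. Combining these gives $\mu_{X}(t) = -m(t)/m'(t)$, equivalently the separable relation $m'(t)/m(t) = -1/\mu_{X}(t)$, valid on the range $0 \le t < F^{-1}(1)$ where $\bar{F}(t) > 0$ and hence $m(t) > 0$ and $\mu_{X}(t) > 0$.

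The next step is to integrate this logarithmic-derivative identity from $0$ to $t$, yielding $\ln m(t) - \ln m(0) = -\int_{0}^{t}\frac{1}{\mu_{X}(u)}\,du$, so that $m(t) = m(0)\,e^{-\int_{0}^{t}1/\mu_{X}(u)\,du}$. I would then identify the constant $m(0)$: since $F(0) = 0$ gives $\bar{F}(0) = 1$, we have $m(0) = \int_{0}^{\infty}\bar{F}(x)\,dx = E[X] = \mu_{X}(0)$. Substituting and then dividing through by $\mu_{X}(t)$ via $\bar{F}(t) = m(t)/\mu_{X}(t)$ recovers exactly the claimed formula on $[0, F^{-1}(1))$. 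The second branch is immediate: for $t \ge F^{-1}(1) = \sup\{t : F(t) < 1\}$ one has $F(t) = 1$, hence $\bar{F}(t) = 0$.

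The main obstacle is not the algebra but the regularity needed to justify differentiating $m$ and applying the fundamental theorem of calculus. Under the stated hypothesis that $F$ is right continuous with $F(0)=0$, the function $m(t)=\int_{t}^{\infty}\bar{F}(x)\,dx$ is absolutely continuous, so $m'(t) = -\bar{F}(t)$ holds at every continuity point of $\bar{F}$ and almost everywhere; the identity $m'/m = -1/\mu_{X}$ and its integral therefore hold in the absolutely-continuous sense, which is precisely what is required to integrate back up. I would also verify that $\mu_{X}$ stays strictly positive on $(0, F^{-1}(1))$ so that $1/\mu_{X}$ is integrable near the lower limit, and note that finiteness of $E[X]$ is tacitly assumed so that $m(0)$ and $\mu_{X}(0)$ are finite. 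Once these points are dispatched, the closed form follows directly.
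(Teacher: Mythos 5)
Your proposal is correct, and it is the standard derivation of the Cox--Meilijson inversion formula. Note that the paper itself offers no proof of this statement: it is quoted as a known result attributed to Cox (1962) and Meilijson (1972), and is then used as a tool in Theorems 2.5--2.8. Your argument --- setting $m(t)=\int_t^\infty \bar F(x)\,dx$, observing $m'(t)=-\bar F(t)$ so that $m'/m=-1/\mu_X$, integrating to get $m(t)=m(0)e^{-\int_0^t 1/\mu_X(u)\,du}$, identifying $m(0)=E[X]=\mu_X(0)$ via $F(0)=0$, and recovering $\bar F(t)=m(t)/\mu_X(t)$ --- is exactly the classical route, and your attention to where $\bar F>0$ (hence $m>0$, $\mu_X>0$), to the absolute continuity of $m$, and to the tacit finiteness of $E[X]$ supplies precisely the regularity the cited sources rely on. The only point worth tightening is the second branch: for $t\ge F^{-1}(1)$ with $F^{-1}(1)=\sup\{t:F(t)<1\}$, the conclusion $F(t)=1$ uses the right continuity of $F$ at the supremum itself (for $t$ strictly beyond it the definition of the supremum already forces $F(t)=1$), which is where the hypothesis of right continuity earns its keep.
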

\begin{theorem} Let $F$ be right continuous and $\mu_{X}(t) = a + bt $, where $a > 0 , b > 0$ then the reliability function is 
\[ \bar{F}(t) = \begin{cases} 
       \left(\frac{a }{a +bt}\right)^{\frac{1}{b} +1 }  & 0 \leq t < F^{-1}(1) \\
       0 &  F^{-1}(1) \leq t < \infty
   \end{cases}
\]
where $F^{-1}(1) = sup{\{t|F(t) < 1\}}, $
and the MRLAI function is $L_X^{\mu}(t) = \dfrac{a+bt}{a+\frac{bt}{2}}$.
\end{theorem}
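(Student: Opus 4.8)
The plan is to establish the two formulas separately, each by direct substitution into results already available in the excerpt. For the reliability function I would invoke the Cox--Meilijson representation stated as the immediately preceding theorem, namely $\bar{F}(t) = \frac{\mu_X(0)}{\mu_X(t)} e^{-\int_0^t \frac{1}{\mu_X(u)}\,du}$ on $0 \le t < F^{-1}(1)$. Since $\mu_X(t) = a + bt$ gives $\mu_X(0) = a$, the leading factor is immediately $\frac{a}{a+bt}$, and the value $0$ on $[F^{-1}(1),\infty)$ is inherited verbatim from that theorem.

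The only genuine calculation is the integral in the exponent. I would compute $\int_0^t \frac{du}{a+bu} = \frac{1}{b}\ln\frac{a+bt}{a}$, whence the exponential factor collapses to $\left(\frac{a}{a+bt}\right)^{1/b}$. Multiplying this by the leading factor $\frac{a}{a+bt}$ combines the exponents into $1 + \frac{1}{b}$, producing $\bar{F}(t) = \left(\frac{a}{a+bt}\right)^{\frac{1}{b}+1}$, exactly as claimed.

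For the MRLAI function I would argue directly from the definition $L_X^{\mu}(t) = \frac{\mu_X(t)}{\frac{1}{t}\int_0^t \mu_X(u)\,du}$. The average in the denominator requires only $\int_0^t (a+bu)\,du = at + \frac{bt^2}{2}$, so that $\frac{1}{t}\int_0^t \mu_X(u)\,du = a + \frac{bt}{2}$; dividing $\mu_X(t) = a + bt$ by this average yields $L_X^{\mu}(t) = \frac{a+bt}{a+\frac{bt}{2}}$.

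Since both parts reduce to elementary integrations of a reciprocal-linear and a linear integrand, I do not foresee a substantive obstacle. The only points meriting a word of care are the hypotheses: I would note that $a,b>0$ keep the integrand $1/(a+bu)$ positive and finite over the whole integration range, and that $F(0)=0$ together with right continuity is what licenses the appeal to the Cox--Meilijson theorem. I would also remark that the resulting $\bar{F}(t)$ is a Pareto-type survival function that stays strictly positive for every finite $t$, so that here $F^{-1}(1)=\infty$ and the first branch of the cases expression in fact governs all $t>0$.
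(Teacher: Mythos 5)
Your proposal is correct and follows essentially the same route as the paper: invoke the Cox--Meilijson representation (Theorem 2.4) with $\mu_X(t)=a+bt$ for the reliability function, and compute $L_X^{\mu}$ directly from the definition via $\int_0^t(a+bu)\,du=at+\tfrac{bt^2}{2}$. Your version is in fact slightly more complete, since you explicitly evaluate $\int_0^t\frac{du}{a+bu}=\frac{1}{b}\ln\frac{a+bt}{a}$ to reach the closed form $\left(\frac{a}{a+bt}\right)^{\frac{1}{b}+1}$, a step the paper's proof leaves implicit, and you correctly note that $F^{-1}(1)=\infty$ here.
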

\begin{proof}
Using theorem(2.4), we have 
\[ \bar{F}(t) = \begin{cases} 
       \frac{a}{a+bt} e^{-\int_{0}^{t} \frac{1}{a + bu } du} & 0 \leq t < F^{-1}(1) \\
       0 &   F^{-1}(1) \leq t < \infty
   \end{cases} \] where $F^{-1}(1) = sup{\{t|F(t) < 1\}}$. The MRLAI function is 
  \[L_X^{\mu}(t) = \frac{\mu_{X}(t)}{\frac{1}{t} .\int_{0}^{t} \,\mu_{X}(u)du} = \frac{a+bt}{\frac{1}{t} .\int_{0}^{t} \,(a+bu)du} = \dfrac{a+bt}{a+\frac{bt}{2}} . \]  
\end{proof}
\begin{theorem} 
\begin{enumerate}
 \item[(a)]   If $\mu_{X}(t)=\frac{1}{a + bt}$ then  $ \bar{F_{X}}(t) = \left(\dfrac{a+bt}{a}\right)\exp{-(at+\frac{bt^{2}}{2})} $ and $L_{X}^{\mu}(t)=\frac{bt}{(a + bt)\ln{\left(\frac{a+bt}{a}\right)}}$.
 \item[(b)] If $\mu_{X}(t)=e^{a + bt}$ then $ \bar{F_{X}}(t) = \exp\left(\dfrac{e^{-a}(e^{-bt} - 1)}{b}{-bt} \right)$  and $L_{X}^{\mu}(t)=\frac{bt(e^{a + bt})}{e^{a}(e^{bt} -1)}$.
 \end{enumerate}
\end{theorem}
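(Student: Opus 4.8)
The plan is to establish each part by the same two-stage template. First I would recover the reliability function $\bar{F}_X(t)$ from the prescribed mean residual life $\mu_X(t)$ by invoking Theorem 2.4 (the Cox--Meilijson representation), which expresses $\bar F$ entirely in terms of $\mu_X(0)$, $\mu_X(t)$, and the integral $\int_0^t \frac{1}{\mu_X(u)}\,du$. Second, I would substitute the given $\mu_X(t)$ directly into the definition of the MRLAI function, reducing the problem to evaluating the single elementary integral $\int_0^t \mu_X(u)\,du$ that appears in the denominator. No structural theorem is needed beyond Theorem 2.4 and the definition itself; the content is the bookkeeping of two quadratures in each case.

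For part (a), with $\mu_X(t)=\tfrac{1}{a+bt}$ I would first note $\mu_X(0)=\tfrac1a$, so the prefactor in Theorem 2.4 is $\mu_X(0)/\mu_X(t)=\tfrac{a+bt}{a}$, while $\int_0^t \tfrac{1}{\mu_X(u)}\,du=\int_0^t (a+bu)\,du = at+\tfrac{bt^2}{2}$; assembling these yields the claimed $\bar F_X(t)=\bigl(\tfrac{a+bt}{a}\bigr)\exp\bigl(-(at+\tfrac{bt^2}{2})\bigr)$. For the MRLAI function I would compute the averaging integral $\int_0^t \tfrac{1}{a+bu}\,du=\tfrac1b\ln\bigl(\tfrac{a+bt}{a}\bigr)$, so that $\tfrac1t\int_0^t \mu_X(u)\,du=\tfrac{1}{bt}\ln\bigl(\tfrac{a+bt}{a}\bigr)$, and dividing $\mu_X(t)$ by this quantity produces $L_X^\mu(t)=\tfrac{bt}{(a+bt)\ln((a+bt)/a)}$.

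For part (b), with $\mu_X(t)=e^{a+bt}$ the prefactor becomes $\mu_X(0)/\mu_X(t)=e^{-bt}$, and $\int_0^t e^{-(a+bu)}\,du=\tfrac{e^{-a}(1-e^{-bt})}{b}$; substituting into Theorem 2.4 gives $\bar F_X(t)=\exp\bigl(-bt-\tfrac{e^{-a}(1-e^{-bt})}{b}\bigr)$, which I would then rewrite as $\exp\bigl(\tfrac{e^{-a}(e^{-bt}-1)}{b}-bt\bigr)$ to match the stated form. For the MRLAI function I would evaluate $\int_0^t e^{a+bu}\,du=\tfrac{e^a(e^{bt}-1)}{b}$, so the denominator is $\tfrac{e^a(e^{bt}-1)}{bt}$, and dividing $\mu_X(t)=e^{a+bt}$ by it yields $L_X^\mu(t)=\tfrac{bt\,e^{a+bt}}{e^a(e^{bt}-1)}$.

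The computation is entirely routine; the only point requiring care is cosmetic rather than mathematical, namely reconciling the sign conventions inside the exponentials in part (b), where the raw output $-\tfrac{e^{-a}(1-e^{-bt})}{b}$ must be recast as $+\tfrac{e^{-a}(e^{-bt}-1)}{b}$ to agree with the displayed answer. I would also implicitly record that both formulas are valid on the range $0\le t < F^{-1}(1)$ inherited from Theorem 2.4, with $\bar F_X$ vanishing beyond that point, so that the MRLAI expressions are understood on the support of $X$.
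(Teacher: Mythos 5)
Your proposal is correct and follows essentially the same route as the paper: apply Theorem 2.4 to obtain $\bar{F}_X(t)$ from $\mu_X(0)$, $\mu_X(t)$, and $\int_0^t \frac{1}{\mu_X(u)}\,du$, then compute $L_X^\mu(t)$ directly from the definition via the elementary integral $\int_0^t \mu_X(u)\,du$. All four quadratures check out, and your remark on rewriting the sign in part (b) matches the displayed form in the statement.
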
 
\begin{proof} \begin{enumerate}
\item[(a)]   Let  $\mu_{X}(t) = \frac{1}{a + bt}$, then using theorem (2.4), the reliability function of $X$ is given by 
\begin{eqnarray*}
     \bar{F_{X}}(t) &=& \begin{cases} 
       \frac{\mu_{X}(0)}{\mu_{X}(t)} e^{-\int_{0}^{t} \frac{1}{\mu_{X}(u)} du} & 0 \leq t < F^{-1}(1) \\
       0 &  t \geq F^{-1}(1)
   \end{cases}
\end{eqnarray*}
\begin{eqnarray*}
     \bar{F_{X}}(t) &=& \begin{cases} 
      \left(\dfrac{a+bt}{a}\right)\exp{-(at+\frac{bt^{2}}{2})} & 0 \leq t < \infty \\
       0 &  t \geq \infty
   \end{cases}
\end{eqnarray*}
The mean residual ageing intensity is given by
\begin{eqnarray*}L_{X}^{\mu}(t) &=&  \frac{\mu_{X}(t)}{\frac{1}{t} .\int_{0}^{t} \mu_{X}(u)du}\\ &=& \frac{\frac{1}{a+bt}}{\frac{1}{t} .\int_{0}^{t} (\frac {1}{a+bu})du}\\ &=& \frac{bt}{(a + bt)\ln{\left(\frac{a+bt}{a}\right)}}
\end{eqnarray*}
which is a decreasing function in $t$.
\item[(b)]   Let $\mu_{X}(t) = e^{a + bt}, $ then  $\bar{F_{X}}(t) = \frac{\mu_{X}(0)}{\mu_{X}(t)} e^{-\int_{0}^{t} \frac{1}{\mu_{X}(u)} du}$ $ = \exp\left(\dfrac{e^{-a}(e^{-bt} - 1)}{b}{-bt} \right)$  \\ and $L_{X}^{\mu}(t) =  \frac{\mu_{X}(t)}{\frac{1}{t} .\int_{0}^{t} \mu_{X}(u)du} = \frac{e^{a + b t }}{\frac{1}{t} .\int_{0}^{t}e^{a + bu}du} = \frac{bt(e^{a + bt})}{e^{a}(e^{bt} -1)}$. 
\end{enumerate}
\end{proof}
\begin{theorem}
For a random variable $X$, $L_X^{\mu}(t) = 1 $, for $t>0$,  if and only if $X$ follows exponential distribution with a rate parameter $\lambda$.
\end{theorem}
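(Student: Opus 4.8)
The plan is to bootstrap off Theorem 2.1, which already establishes that $L_X^{\mu}(t)=1$ for all $t>0$ if and only if the mean residual life $\mu_X(t)$ is constant. With that equivalence in hand, the entire theorem reduces to the classical characterization that a non-negative random variable has constant mean residual life precisely when it is exponential. Thus I would prove the two implications separately by routing each one through the intermediate statement ``$\mu_X(t)$ is constant.''

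For the direction exponential $\Rightarrow L_X^{\mu}\equiv 1$, I would take $\bar F_X(t)=e^{-\lambda t}$ and compute the mean residual life directly from the integral representation $\mu_X(t)=\frac{1}{\bar F_X(t)}\int_t^\infty \bar F_X(u)\,du$. The exponential integral collapses to $\mu_X(t)=1/\lambda$, a constant, whereupon Theorem 2.1 immediately yields $L_X^{\mu}(t)=1$ for all $t>0$.

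For the converse, I would start from $L_X^{\mu}\equiv 1$, invoke Theorem 2.1 to write $\mu_X(t)=c$ for some constant $c>0$, and then recover the survival function. The cleanest route is to rewrite the MRL identity as $\int_t^\infty \bar F_X(u)\,du = c\,\bar F_X(t)$ and differentiate in $t$, obtaining the first-order relation $-\bar F_X(t)=c\,\bar F_X'(t)$; solving with the initial condition $\bar F_X(0)=1$ gives $\bar F_X(t)=e^{-t/c}$, the exponential survival function with rate $\lambda=1/c$. Alternatively one could substitute $\mu_X(t)\equiv c$ directly into the Cox--Meilijson formula of Theorem 2.4, where the factor $\mu_X(0)/\mu_X(t)=c/c$ cancels and leaves $\bar F_X(t)=e^{-\int_0^t (1/c)\,du}=e^{-t/c}$.

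There is no serious obstacle here: both halves are short once Theorem 2.1 is applied, and the analytic content is a single elementary ODE. The only point demanding care is the bookkeeping of the rate parameter, since a constant MRL equals the reciprocal of the rate ($c=1/\lambda$); one must therefore correctly identify the recovered $\bar F_X(t)=e^{-t/c}$ as the exponential law with rate $\lambda=1/c$ rather than with rate $c$.
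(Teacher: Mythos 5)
Your proposal is correct and follows essentially the same route as the paper: the forward direction computes $\mu_X(t)=1/\lambda$ and concludes $L_X^{\mu}\equiv 1$, and the converse invokes Theorem 2.1 to get a constant MRL and then the Cox--Meilijson representation of Theorem 2.4 to recover $\bar F_X(t)=e^{-t/c}$. Your explicit remark that the constant $c$ equals $1/\lambda$ is in fact a point the paper's own proof glosses over (it names the constant $\lambda$ and ends with rate $1/\lambda$), so your bookkeeping is, if anything, slightly more careful.
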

\begin{proof} The density function of a exponential random variable $X$ is given by
\[ f(t) = \begin{cases} 
      \lambda e^{-\lambda t} & t\geq 0 \\
       0 & t < 0 
   \end{cases}
\]
The survival function is given by
\[ \bar{F}(t) = \begin{cases} 
       e^{-\lambda t} & 0 < t < \infty \\
       1 &  t \leq 0
   \end{cases}
\]
Now, \[\mu_{X}(t) = E[X  | X > t] - t \]
\[ f_{X|X > t }(x) = \begin{cases} 
     \frac{\lambda e^{-\lambda t}}{e^{-\lambda t}}  & t < x <  \infty \\
       0 & \text{otherwise}
   \end{cases}
\]
\[ \mu_{X}(t) = \frac{1}{\lambda}\]
\[L_X^{\mu}(t) = 1\]
Conversely, let $L_X^{\mu}(t) = 1$ then using theorem (2.1), $\mu_{X}(t) = \lambda$ where, $\lambda$ is a constant. Also, using theorem (2.4), we get
\[\bar{F}(t) = \left( \frac{\lambda}{\lambda}\right) e^{-\int_{0}^{t} \frac{1}{\lambda} du} = e^{-\frac{t}{\lambda}}\] 
\[F(t) = 1 - \bar{F}(t)  = 1 - e^{-\frac{t}{\lambda}} \]
\[f(t) = \begin{cases}
     \frac{1}{\lambda} e^{-\frac{t}{\lambda}} & 0 < t < \infty\\
    0 & \text{elsewhere}
\end{cases}\]
\end{proof}
\begin{theorem}
For a random variable $X$, $L_X^{\mu}(t) = 2 $, for $t \geq 1$,  if and only if $X$ follows Pareto distribution. 
\end{theorem}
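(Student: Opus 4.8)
The plan is to prove both implications by reducing the condition $L_X^{\mu}(t)=2$ to a statement about the \emph{shape} of the mean residual life function, and then using Theorem 2.4 to translate between the MRL and the survival function. I would treat the converse direction first, since it is the substantive one and it isolates the key fact. Assuming $L_X^{\mu}(t)=2$ for $t\ge 1$, I would clear the definition: from $\mu_X(t)/\big(\tfrac1t\int_0^t\mu_X(u)\,du\big)=2$ I get the integral equation $t\,\mu_X(t)=2\int_0^t\mu_X(u)\,du$. Differentiating both sides in $t$ (exactly as in the proof of Theorem 2.1) gives $\mu_X(t)+t\,\mu_X'(t)=2\mu_X(t)$, i.e. $t\,\mu_X'(t)=\mu_X(t)$. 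This separable ODE integrates to $\mu_X(t)=c\,t$ for a positive constant $c$; so $L_X^{\mu}\equiv 2$ is equivalent to the MRL being linear through the origin. To recover the distribution I would convert this into a hazard-rate statement via the standard identity $r_X(t)=(1+\mu_X'(t))/\mu_X(t)$, which here gives $r_X(t)=(1+c)/(c\,t)$, a hazard rate proportional to $1/t$. Integrating yields $\bar F_X(t)\propto t^{-(1+1/c)}$, a Pareto tail with shape $\alpha=1+1/c$; alternatively one may feed $\mu_X(t)=ct$ directly into the representation of Theorem 2.4 and read off the same $\bar F_X$.

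For the forward direction I would start from a Pareto law with scale $1$ and shape $\alpha>1$, $\bar F_X(t)=t^{-\alpha}$ on $[1,\infty)$, compute $\mu_X(t)=\tfrac{1}{\bar F_X(t)}\int_t^\infty \bar F_X(u)\,du=t/(\alpha-1)$, and substitute into the definition. Using $\mu_X(u)=u/(\alpha-1)$ one finds $\tfrac1t\int_0^t\mu_X(u)\,du=t/\big(2(\alpha-1)\big)$, so the ratio collapses to $L_X^{\mu}(t)=2$. This mirrors the style of the exponential characterisation in Theorem 2.7, where the known MRL is plugged back into the definition to evaluate the MRLAI.

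The main obstacle I anticipate is the behaviour on the initial interval $[0,1)$ and the boundary/singularity issues it creates. The ODE argument only pins down $\mu_X$ on $[1,\infty)$, so producing a genuine distribution requires care about the left endpoint of the support, and Theorem 2.4 is awkward here because $\mu_X(t)=ct$ forces $\mu_X(0)=0$ and makes $\int_0^t du/\mu_X(u)$ diverge at $0$. The cleanest fix is to treat $1$ as the left endpoint of the support, which is precisely why the hypothesis is stated for $t\ge 1$, and to integrate the hazard rate from $1$ rather than from $0$. I would therefore devote most of the care in the write-up to justifying this normalisation and to checking that the denominator $\tfrac1t\int_0^t\mu_X(u)\,du$ is handled consistently with the Pareto MRL, so that the ratio evaluates to exactly $2$ rather than merely approaching it.
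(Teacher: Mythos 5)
Your proposal follows essentially the same route as the paper: the forward direction plugs the Pareto mean residual life $\mu_X(t)=t/(\alpha-1)$ into the definition of $L_X^{\mu}$, and the converse clears the definition, differentiates to obtain $t\,\mu_X'(t)=\mu_X(t)$, solves this separable equation to get $\mu_X(t)=ct$, and inverts via Theorem 2.4 to recover the Pareto survival function $\left(b/t\right)^{c+1}$. Your additional care about the left endpoint of the support --- integrating the hazard from $1$ rather than $0$, and the consistency of the denominator $\tfrac1t\int_0^t\mu_X(u)\,du$ on the initial interval --- addresses a subtlety the paper's own proof glosses over, but it does not change the method.
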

\begin{proof}
 The density function of $X$ is given by
\[ f_{X}(t) = \begin{cases} 
      \dfrac{ab^{a}}{t^{a+1}} & t\geq b \\
       0 & t < b 
   \end{cases}
\]
The survival function is given by
\[ \bar{F_{X}}(t) = \begin{cases} 
      \left(\dfrac{b}{t}\right)^{a} & b \leq t < \infty \\
       1 & -\infty < t < b
   \end{cases}
\]   
Now,
\begin{eqnarray*}
    \mu_{X}(t) &=& \dfrac{\int_{t}^{\infty}\bar{F_{X}}(x) dx}{\bar{F_{X}}(t) }\\
    &=& \dfrac{\int_{t}^{\infty} \left(\dfrac{b}{x}\right)^{a}dx}{\left(\dfrac{b}{t}\right)^{a} }\\
    &=& \dfrac{t}{a-1}  \hspace{1cm} a >  1
\end{eqnarray*}
Then,
\[L_{X}^{\mu}(t) = \dfrac{\mu_{X}(t)}{\frac{1}{t} .\int_{0}^{t} \,\mu_{X}(u)du} =  \dfrac{\left(\dfrac{t}{a-1}\right)}{\int_{0}^{t}\left(\dfrac{u}{a-1}\right)du} = 2.\] 
Conversely, let $L_X^{\mu}(t) = 2$ 
\begin{align*}
& \implies L_X^{\mu}(t) = \frac{\mu_{X}(t)}{\frac{1}{t} \int_{0}^{t} \mu_{X}(u) \, du} = 2 \\
& \implies \int_{0}^{t} \mu_{X}(u) \, du = \frac{t}{2} \, \mu_{X}(t) \hspace{0.5cm} \forall t \\
& \implies \frac{d}{dt} \int_{0}^{t} \mu_{X}(u) \, du = \frac{d}{dt} \left(\frac{t}{2} \cdot \mu_{X}(t)\right) \\
& \implies \mu_{X}(t) = \frac{1}{2} \cdot \mu_{X}(t) + \frac{t}{2} \cdot \mu_{X}'(t) \\
& \implies \mu_{X}(t) = t \cdot \mu_{X}'(t) \\
& \implies \frac{dt}{t} = \frac{d\mu_{X}(t)}{\mu_{X}(t)} \\
& \implies \ln{t} = \ln(\mu_{X}(t)) + \ln(a-1) \\
& \implies \mu_{X}(t) = \frac{t}{c} 
\end{align*}

where $c$ is a constant. Using theorem 1.2, we get
\[\bar{F_{X}}(t) = \left( \frac{b}{t}\right) e^{-\int_{b}^{t} \left(\frac{c}{u}\right) du} = \left(\dfrac{b}{t}\right)^{c+1}\]
which is a survival function of Pareto distribution. 
\end{proof}

\begin{definition}
     A random variable $X$ is said to be increasing in mean residual life ageing intensity(IMRLAI) if the corresponding MRLAI function $L_X^{\mu}(t) $  is increasing in $t>0$. We call the random variable $X$ as decreasing in mean residual life ageing intensity (DMRLAI) if  $L_X^{\mu}(t) $ is decreasing in $t>0$.\end{definition}
 \begin{theorem}
  Let $X$ be a random variable  with linear mean residual life function $\mu_{X}(t) = a + bt$ with $a>0, b\geq 0$ which is increasing then $X$ is  IMRLAI.  
\end{theorem}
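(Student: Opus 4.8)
The plan is to reduce the statement to a one-line monotonicity check by first recalling the explicit form of the MRLAI function for a linear mean residual life. By Theorem 2.5, when $\mu_X(t) = a + bt$ with $a > 0$ and $b \geq 0$, the MRLAI function is $L_X^{\mu}(t) = \frac{a+bt}{a + \frac{bt}{2}}$, so no integration remains to be performed and the problem becomes purely analytic in the single variable $t$. The whole task is then to verify that this ratio is nondecreasing.

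First I would differentiate $L_X^{\mu}$ with respect to $t$ via the quotient rule, writing the numerator as $N(t) = a + bt$ and the denominator as $D(t) = a + \frac{bt}{2}$, so that $N'(t) = b$ and $D'(t) = \frac{b}{2}$. Then $\bigl(L_X^{\mu}\bigr)'(t) = \frac{N'(t)D(t) - N(t)D'(t)}{D(t)^2}$, and the key cancellation is that the $b^2 t$ terms in $N'D - ND'$ cancel, leaving $N'D - ND' = ab - \frac{ab}{2} = \frac{ab}{2}$. Hence $\bigl(L_X^{\mu}\bigr)'(t) = \frac{ab}{2\left(a + \frac{bt}{2}\right)^2}$.

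Since $a > 0$, $b \geq 0$, and the denominator is strictly positive for every $t > 0$, this derivative is nonnegative, and strictly positive when $b > 0$, so $L_X^{\mu}$ is increasing in $t$; that is, $X$ is IMRLAI. If one prefers to avoid differentiation, I would instead rewrite $L_X^{\mu}(t) = \frac{2(a+bt)}{2a+bt} = 1 + \frac{bt}{2a+bt}$ and note that $\frac{bt}{2a+bt}$ increases in $t$ because its reciprocal $\frac{2a}{bt} + 1$ decreases.

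There is essentially no hard step here, since the substantive work was already front-loaded into Theorem 2.5; the only points to watch are recording the correct cancellation of the quadratic terms in the derivative and observing that the boundary case $b = 0$ corresponds to a constant mean residual life, for which $L_X^{\mu} \equiv 1$ is (weakly) increasing, consistent with the hypothesis that $\mu_X$ is increasing.
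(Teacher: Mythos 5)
Your proof is correct and follows essentially the same route as the paper: both reduce to the closed form $L_X^{\mu}(t) = \frac{a+bt}{a+\frac{bt}{2}}$ and conclude monotonicity, with the only difference being that you actually carry out the derivative computation (and note the degenerate case $b=0$) where the paper simply asserts the function is increasing.
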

\begin{proof}
    From theorem $(2.1)$ we have 
    \[L_X^{\mu}(t) = \frac{\mu_{X}(t)}{\frac{1}{t} .\int_{0}^{t} \,\mu_{X}(u)du} = \frac{a+bt}{\frac{1}{t} .\int_{0}^{t} \,(a+bu)du} = \dfrac{a+bt}{a+\frac{bt}{2}}.\] 
    which is a increasing function for $t\geq 0.$
\end{proof}    
In the next three examples it shows that if mean residual life function is monotonic then it's ageing intensity function may not hold its monotonic property.
\begin{result}
   If $\mu_{X}(t)$ is monotonic function then $L_X^{\mu}(t) $ need not to be monotonic . 
\end{result} 
\begin{example}
   Let $X$ be a random variable having Erlang distribution with density function $ f_{X}(t) = 4te^{-2t} \hspace{0.5cm} t\geq 0$ then 
   \begin{eqnarray*}
     \mu_{X}(t) &=&  \frac{\int_{t}^{\infty} \Bar{F}_{X}(x)dx}{\Bar{F}_{X}(t)} =  \dfrac{t+1}{2t+1}  
   \end{eqnarray*}
   which is decreasing in $t\geq 0$ . So $X$ is decreasing in MRL. The mean residual life ageing intensity (MRLAI) function of random variable $X$ is
   \begin{eqnarray*}
    L_X^{\mu}(t) &=& \frac{\mu_{X}(t)}{\frac{1}{t} .\int_{0}^{t} \,\mu_{X}(u)du}  = \dfrac{4t\cdot\left(t+1\right)}{\left(2t+1\right)\left(\ln\left(2t+1\right)+2t\right)}
   \end{eqnarray*}
   which is nonmonotone in $t\geq 0$ as $ L_X^{\mu}(0.5) =  0.885924163724462,  L_X^{\mu}(2.0) = 0.855700709220817 $ and $ L_X^{\mu}(4.5) = 0.875905814337691$. 
\end{example}
\begin{example}
   Let $X_{}$  be a random variable having a uniform distribution over $[a, b], 0\leq a < b < \infty$. Then, the mean residual life is given by $\mu_{X}(t) = \frac{(b-t)}{2}, a < t < b$, which is decreasing in $t$. So, X is DMRL function. Then $L_X^{\mu}(t)$ decreases with $t$ for $a < t < b$. \\
Here,
\[ f_{X|X > t }(x) = \begin{cases} 
     \frac{1}{b-t}  & t < x <  b \\
       0 & \text{otherwise}
   \end{cases}
\] 
\[\mu_{X}(t) = E[X  | X > t] - t = \frac{(b+t)}{2} - t = \frac{(b - t)}{2} \]
\[L_X^{\mu}(t) = b -t\]
which decreases with $t$, for $a<t<b$. 
\end{example}
\begin{example}
    Let $X_{}$  be a random variable having gamma distributions with respective probability density functions given by $f_{X_{}}(t) = \frac{1}{2}t^{2}e^{-t}, t>0$ with its survival function $\frac{\left(t^2+2t+2\right)\mathrm{e}^{-t}}{2}$.  The mean residual life function is given by $\mu_{X}(t) = E[X  | X > t] - t =  \frac{t^{2} + 4 t + 6}{t^{2} + 2t + 2}$ which is monotonically decreasing for $t>0$. Now,mean residual life ageing intensity function
    \begin{eqnarray*}
       L_X^{\mu}(t) &=& \frac{\mu_{X}(t)}{\frac{1}{t} .\int_{0}^{t} \,\mu_{X}(u)du}\\ 
       &=& \dfrac{t.\left(\frac{t^{2} + 4 t + 6}{t^{2} + 2t + 2}\right)}{\ln\left(t^2+2t+2\right)+2\arctan\left(t+1\right)+t-\dfrac{2\ln\left(2\right)+{\pi}}{2}}
    \end{eqnarray*}
    As $L_{X}^{\mu}(2.5) = 0.7767024, L_{X}^{\mu}(5.0) =  0.7525321 $ and $L_{X}^{\mu}(10) = 0.7720608
    $ clearly it's  a non-monotone function.
\end{example}
From the foregoing three counterexamples, it is observed that an IMRL random variable can be IMRLAI or DMRLAI. For the IMRL random variable, the MRLAI function could be non-monotonic as well. 

\begin{example}
   Let a random variable $X$ have mean residual life  function
\[ \mu_{X}(t) = \begin{cases} 
     0.50 & 0 \leq t \leq 1 \\
       t - 0.50 &  1 < t < \infty
   \end{cases} 
\]
which is monotonically increasing for $t>0$. 
The corresponding MRLAI function of $X$ is 
 \[L_X^{\mu}(t) = \begin{cases} 
     1 & 0 \leq t \leq 1 \\
       \frac{2t-1}{t-1} &  1 < t < \infty
   \end{cases} \] 
It is easy to see that $L_X^{\mu}(t)$ is constant for $0 \leq t \leq 1$, and decreasing for $t >  1$. Here , $L_X^{\mu}(t)$ is non-monotone.
\end{example}
The ageing property of a component using hazard rate ageing intensity function  can not be generalized  to the ageing intensity function based on  mean residual life function and vice versa. 
\begin{result}
 If ageing intensity based on hazard rate  function  is monotonic then  ageing intensity based on mean residual life function may not be monotonic. 
\end{result}
\begin{example}
    Let $X$ be a random variable having Erlang distribution with density function $ f_{X}(t) = 4te^{-2t}, \hspace{0.1cm} t\geq 0$ then  $r_{X}(t) = \dfrac{f_{X}(t)}{\Bar{F}_{X_{}}(t)} = \dfrac{4t}{(1+2t)}$ and $ L_{X}(t) = \frac{r_{X}(t)}{\frac{1}{t} .\int_{0}^{t} \,r_{X}(u)du} = \dfrac{4t}{(1+2t)}$, which decreases in $t>0$.Now, $\mu_{X}(t) =  \dfrac{t+1}{2t+1}  $ then $ L_X^{\mu}(t) =\dfrac{4t\cdot\left(t+1\right)}{\left(2t+1\right)\left(\ln\left(2t+1\right)+2t\right)}$ it is clear from counterexample (2.1)\,  $ L_X^{\mu}(t)$ is nonmonotone in $t>0$. 
\end{example}
  \section{ Closure properties of IMRLAI and DMRLAI classes}
  In this section, we study whether IMRLAI and DMRLAI classes are closed under different reliability operations, viz., mixture of distributions, convolution of distributions, and formation of k-out-of-n systems. By k-out-of-n system, we mean a system which operates as long as at least k out of the n components forming the system work.

The subsequent counterexamples demonstrate that the IMRLAI class does not maintain closure under the aforementioned operations.
  \begin{example}
       Let $X_{1}$ and $X_{2}$ be two random variables having respective mean residual life  functions $\mu_{X_{1}}(t) = 1 + 8t$ and $\mu_{X_{2}}(t)= 1+0.1t, \hspace{0.3cm}t>0 $ then using theorem $(2.1)$ its survival function is $\Bar{F}_{X_{1}}(t) =\left(\frac{1}{8t+1}\right)^{\frac{9}{8}} $ and $\Bar{F}_{X_{2}}(t) =\left(\frac{1}{0.1t+1}\right)^{11} $ respectively. Let $X$ be another random variable having survival function $\Bar{F}_{X}(t) = 0.2\Bar{F}_{X_{1}}(t) +  0.8\Bar{F}_{X_{2}}(t)$ then 
       \begin{eqnarray*}
         \Bar{F}_{X}(t) &=& 0.2 \left(\frac{1}{8t+1}\right)^{\frac{9}{8}}  + 0.8 \left(\frac{1}{0.1t+1}\right)^{11} 
       \end{eqnarray*}
 The mean residual life function of random variable $X$ is given by
 \begin{eqnarray*}
    \mu_{X}(t) =  \frac{\int_{t}^{\infty} \Bar{F}_{X}(x)dx}{\Bar{F}_{X}(t)}
    = \dfrac{\frac{1}{5\sqrt[8]{8t+1}}+\frac{8000000000}{\left(t+10\right)^{10}}}{\frac{1}{5\left(8t+1\right)^\frac{9}{8}}+\frac{4}{5\left(\frac{t}{10}+1\right)^{11}}}
 \end{eqnarray*}
  Then the mean residual ageing intensity function $L_X^{\mu}(t)$ is nonmonotone as  $L_X^{\mu}(6) =3.18404390537899, L_X^{\mu}(8) =  3.44726388676388$ and  $L_X^{\mu}(20) =\\  2.37496470241032 $.
 Using theorem$(3.1) $ both $X_{1}$ and $X_{2}$ having IMRLAI, but the random variable $X$  of its mixture  is nonmonotone. Thus, MRLAI  is not closed with respect to the mixture.
  \end{example}
    Further,we demonstrate that  the closure property under convolution does not apply to the MRLAI class.
\begin{example}
    Let $X_{1}$ and $X_{2}$ be two identical and independently distributed random variables having exponential distribution with parameter $1$. Then the MRLAI function of random variables $X_{1}$ and $X_{2}$   is 
 given by $L_{X_{1}}^{\mu}(t) = L_{X_{2}}^{\mu}(t)  = 1  $, clearly which is a monotonic function . Now, let $X_{c}=X_{1}+X_{2} $ then the survival  function of the random variable $X_{c}$ is  $\bar{F_{X_{c}}}(t) = \left(t+1\right)\mathrm{e}^{-t}$ with mean residual life function is $\mu_{X_{c}}(t) =\dfrac{t+2}{t+1} $. Then the MRLAI function of the random variable $X_{c}$ is given by  
    \[L_{X_{c}}(t) = \dfrac{t\cdot\left(t+2\right)}{\left(t+1\right)\left(\ln\left(t+1\right)+t\right)}\]
    As $L_{X_{c}}(0.20) = 0.9590531, L_{X_{c}}(3.0) = 0.8549358 ,$  and $L_{X_{c}}(10.0) = 0.8799147$  which is nonmonotone for $t>0$.
\end{example}
 Order statistics find practical use in analysing the sequence of failures in components or systems. The k-th order statistic represents the lifespan of the (n-k+1)-out-of-n system. Furthermore, we present a counterexample to demonstrate that the IMRLAI class does not remain closed when forming a k-out-of-n system.
\begin{example}
     Let $X$ be a random variable with mean residual life function  $\mu_{X}(t) = 1 + t, t>0$ then $L_X^{\mu}(t) = \dfrac{t\cdot\left(t+1\right)}{\ln\left(t+1\right)}$  which is increasing for $t\geq 0$ and $f_{X_{(2:3)}}(t)$ be the density function of the 2nd order statistics in a sample of size 3 from the distribution of $X$ is given by $f_{X_{(2:3)}}(t)=\dfrac{12t\cdot\left(t+2\right)}{\left(t+1\right)^7} \hspace{0.5cm} t\geq 0 $ and its survival function is $ \Bar{F}_{X_{(2:3)}}(t) =  \dfrac{3t^2+6t+1}{\left(t+1\right)^6} $
     then the mean residual life function of 2nd order statistics is  $\mu_{X_{(2:3)}}(t) =\dfrac{\left(t+1\right)\left(5t^2+10t+3\right)}{5\left(3t^2+6t+1\right)}$ and the  mean residual life ageing intensity function of $X_{(2:3)}$ is \begin{align*}
L_{X_{(2:3)}}^{\mu}(t) = & \frac{\mu_{X_{(2:3)}}(t)}{\frac{1}{t} \cdot \int_{0}^{t} \mu_{X_{(2:3)}}(u) \, du} \\
= & \frac{t \cdot (t+1) \cdot (5t^2 + 10t + 3)}{450 \cdot \left(3t^2 + 6t + 1\right) \cdot \left(4 \ln\left(3t^2 + 6t + 1\right) + 15t \cdot (t+2)\right)}
\end{align*}
which is non-monotone as
\[L_{X_{(2:3)}}^{\mu}(0.11) = 0.0001189386\]
\[L_{X_{(2:3)}}^{\mu}(0.12) = 0.0001189296\]
\[ L_{X_{(2:3)}}^{\mu}(0.13) = 0.0001189584\]
\end{example}
 From the counterexample given below, we see that MRLAI class is not closed under the formation of a k-out-of-n system.
  \begin{example}
      Let $X_{1}$ and $X_{2}$ be two random variables having mean residual life functions $\mu_{X_{1}}(t) = \frac{1}{1+t}$and $\mu_{X_{2}}(t) = \frac{1}{1+2t}$ respectively with $L_{X_{1}}^{\mu}(t)  = \frac{t}{(1+t)\ln(1+t)}$  and  $L_{X_{2}}^{\mu}(t)  = \frac{2t}{(1+2t)\ln(1+2t)}$  each of which is DMRLAI for $t>0$.  Let the survival function of the mixture of random variable $X$ be $\Bar{F}_{X}(t) = 0.2 \Bar{F}_{X_{1}}(t) + 0.8 \Bar{F}_{X_{2}}(t)$ where $\Bar{F}_{X_{1}}(t) = (1+t)e^{-(t+\frac{t^{2}}{2})}$ and $\Bar{F}_{X_{2}}(t) = (1+2t)e^{-(t+t^{2})}$ .Now  $\Bar{F}_{X}(t) = 0.2[(1+t)e^{-(t+\frac{t^{2}}{2})}]+0.8[(1+2t)e^{-(t+t^{2})}]$ then mean residual life function of $X$ is $\mu_{X}(t) = \dfrac{\mathrm{e}^{t^2}+4\mathrm{e}^\frac{t^2}{2}}{\left(t+1\right)\mathrm{e}^{t^2}+\left(8t+4\right)\mathrm{e}^\frac{t^2}{2}}$ it fallows that DMRLAI class is not closed under mixture of distributions as $L_X^{\mu}(0.3) =0.8129797$ \, $L_X^{\mu}(2.0) =0.6127436 $ and $L_X^{\mu}(3.0) =  0.6381471$ the MRLAI function of X, is non monotone for $t>0$.
  \end{example}
  \begin{example}
  Let $X_{}$  be a random variable having a uniform distribution over $[a, b], 0\leq a < b < \infty$.  and $f_{X_{(2)}}(t)$ be the density function of the 2nd order statistics $X_{(2)}$ in a sample of size 3 from this distribution. The mean residual life function of $X_{(2)}$  is given by $\mu_{X_{(2)}}(t) = \dfrac{\left(b-t\right)\left(t+b-2a\right)}{2\left(2t+b-3a\right)}$ and the corresponding MRLAI function is $L_{X_{(2)}}(t) = \dfrac{h(t)}{k(t)}$, where $h(t) = 8t\cdot\left(b - t\right)\left(t+b-2a\right)$ and
 \begin{equation*}
\begin{aligned}
  k(t) = & \bigl(2t+b-3a\bigr) \\
         & \times \biggl(3\bigl(b-a\bigr)^2\ln\bigl(2t+b-3a\bigr) - 2t^2 + 2\bigl(b+a\bigr)t \\
         & \quad - 3\bigl(b-a\bigr)^2\ln\bigl(b-a\bigr) - 2ab\biggr)
\end{aligned}
\end{equation*}
  It can be verified that $L_{X_{(2)}}(t)$ is nonmonotone, although from counterexample $(2.2)$ we get $L_X^{\mu}(t) = b -t$ which  decreases with $t$ for $a < t < b$.
  \end{example}

\section{Some properties of MRLAI order}
 We define a probabilistic order based on the MRLAI function $L_X^{\mu}(t)$  as fallows.
 \begin{definition}
     A random variable $X$ is said to be smaller than random variable $Y$ in the MRLAI order (denoted by $X \leq_{MRLAI} Y $) if $ L_{{X}}^{\mu}(t) \leq  L_{{Y}}^{\mu}(t)$, for all $t>0$.
 \end{definition}
 
We mention a family of parametric distributions where MRLAI  between the random variables is present. 
\begin{eg}
  Let $X_{i}$ be a random variable having Weibull distribution with survival function $\bar{F}_{X_{i}}(t) = \exp-({\frac{t}{\beta_{i}}})^{\alpha_{i}}, \alpha_{i} , \beta_{i} > 0 , t \geq 0 , i = 1, 2.$ If $ \alpha_{1} \leq \alpha_{2}$ then $X_{1} \leq_{MRLAI} X_{2} $. 
\end{eg}
\begin{theorem}
    For two random $X$ and $Y$, the following conditions are equivalent.

(i) $X \leq_{MRLAI}Y$, 

(ii) $\frac{\int_{0}^{t} \mu_{X}(u) du }{\int_{0}^{t} \mu_{Y}(u) du}$ is decreasing in $t>0$.
\end{theorem}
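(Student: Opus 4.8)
The plan is to reduce both conditions to a single inequality between logarithmic derivatives. First I would set $M_X(t)=\int_0^t\mu_X(u)\,du$ and $M_Y(t)=\int_0^t\mu_Y(u)\,du$; since $\mu_X,\mu_Y$ are positive on $(0,\infty)$, both $M_X$ and $M_Y$ are strictly positive, strictly increasing, and differentiable there, with $M_X'=\mu_X$ and $M_Y'=\mu_Y$. The key observation is that the MRLAI function can be rewritten as
\[
L_X^{\mu}(t)=\frac{\mu_X(t)}{\frac1t\int_0^t\mu_X(u)\,du}=\frac{t\,M_X'(t)}{M_X(t)}=t\,\frac{d}{dt}\ln M_X(t),
\]
and likewise $L_Y^{\mu}(t)=t\,\frac{d}{dt}\ln M_Y(t)$. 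This identity is the crux: it exhibits $L_X^{\mu}$ as $t$ times the logarithmic derivative of the integrated mean residual life.

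Next I would analyze the ratio appearing in (ii). Writing $R(t)=M_X(t)/M_Y(t)>0$, the statement that $R$ is decreasing on $(0,\infty)$ is equivalent, by positivity of $R$, to $\frac{d}{dt}\ln R(t)\le 0$. Differentiating,
\[
\frac{d}{dt}\ln R(t)=\frac{M_X'(t)}{M_X(t)}-\frac{M_Y'(t)}{M_Y(t)}.
\]
Multiplying through by $t>0$, this derivative is nonpositive for every $t$ exactly when $t\frac{M_X'(t)}{M_X(t)}\le t\frac{M_Y'(t)}{M_Y(t)}$, i.e.\ precisely when $L_X^{\mu}(t)\le L_Y^{\mu}(t)$ for all $t>0$. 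Reading the chain of equivalences in both directions yields (i)$\iff$(ii) at once.

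The computation above is essentially routine, so the only point demanding care is the passage between ``$R$ is decreasing'' and ``$R'\le 0$ everywhere.'' I would justify this using the standing smoothness assumptions on the mean residual life functions that are already invoked throughout Section 2, so that $M_X,M_Y$ are continuously differentiable and $R$ is differentiable: a $C^1$ function is non-increasing on an interval if and only if its derivative is non-positive there. If one prefers to sidestep differentiability, the same equivalence follows by integrating the identity $L_X^{\mu}(s)/s=(\ln M_X)'(s)$, giving $\ln M_X(t)-\ln M_X(t_0)=\int_{t_0}^{t}\frac{L_X^{\mu}(s)}{s}\,ds$ and an analogous expression for $Y$, whence $\ln R(t)$ is monotone according to the sign of $L_X^{\mu}-L_Y^{\mu}$. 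I do not anticipate any deeper obstacle; once $L_X^{\mu}$ is recognized as a scaled logarithmic derivative of $M_X$, the result is immediate.
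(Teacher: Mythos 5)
Your proof is correct and takes essentially the same route as the paper's: both arguments reduce $L_X^{\mu}(t)\le L_Y^{\mu}(t)$ to the sign of the derivative of the ratio $\int_0^t\mu_X(u)\,du\,\big/\int_0^t\mu_Y(u)\,du$, the paper via the quotient rule applied to $G_Y/G_X$ and you via the logarithmic derivative of $M_X/M_Y$, which is the same computation in different notation. Your explicit remark on the equivalence between ``$R$ decreasing'' and ``$R'\le 0$'' is a point the paper passes over silently, but it introduces no new idea.
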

\begin{proof}
  $(i) \implies (ii)$\\
  Let $X \leq_{MRLAI}Y$ then  $\frac{\mu_{X}(t)}{\frac{1}{t}\int_{0}^{t} \mu_{X}(u) du} \leq \frac{\mu_{Y}(t)}{\frac{1}{t}\int_{0}^{t} \mu_{Y}(u) du}$. Let $G_{X}(t) = \int_{0}^{t} \mu_{X}(u) du $ and $G_{Y}(t) = \int_{0}^{t} \mu_{Y}(u) du $ . Also, $G_{X}'(t) =\mu_{X}(t) $ and $G_{Y}'(t) =\mu_{Y}(t) $ then 
       \begin{align*}
& G_{X}(t) \cdot G_{Y}'(t) - G_{Y}(t) \cdot G_{X}'(t) \geq 0 \\
& \iff \frac{G_{X}(t) \cdot G_{Y}'(t) - G_{Y}(t) \cdot G_{X}'(t)}{(G_{X}(t))^2} \geq 0 \\
& \iff h'(t) \geq 0, \quad h(t) = \frac{G_{Y}(t)}{G_{X}(t)} \\
& \iff h(t) = \frac{G_{Y}(t)}{G_{X}(t)} \text{ is increasing in } t > 0. \\
& \iff \frac{G_{X}(t)}{G_{Y}(t)} \text{ is decreasing in } t > 0. \\
& \iff \frac{\int_{0}^{t} \mu_{X}(u) \, du}{\int_{0}^{t} \mu_{Y}(u) \, du} \text{ is decreasing in } t > 0.
\end{align*}

\end{proof}

The reflexive, commutative, and antisymmetric  properties of MRLAI order are given below.

\begin{lemma}
\begin{enumerate}[label=(\roman*)]
    \item $X \leq_{MRLAI}X$.
    \item If $X \leq_{MRLAI} Y$ and  $Y \leq_{MRLAI}Z$, then $X \leq_{MRLAI}Z$
    \item If $X \leq_{MRLAI} Y$ and  $Y \leq_{MRLAI}X$, then $X$ and $Y$ have proportional mean residual life.
\end{enumerate} 
\end{lemma}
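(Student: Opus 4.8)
The plan is to treat the three parts in increasing order of difficulty, with part (iii) carrying essentially all of the content. Parts (i) and (ii) are immediate from the definition: since $X \leq_{MRLAI} Y$ means the pointwise inequality $L_X^{\mu}(t) \leq L_Y^{\mu}(t)$ for all $t>0$, reflexivity (i) follows from $L_X^{\mu}(t) \leq L_X^{\mu}(t)$, and transitivity (ii) follows by chaining $L_X^{\mu}(t) \leq L_Y^{\mu}(t) \leq L_Z^{\mu}(t)$; both simply inherit the corresponding properties of the order relation $\leq$ on the real line. I would dispatch these in a line or two.

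For part (iii), I would first note that the two hypotheses $X \leq_{MRLAI} Y$ and $Y \leq_{MRLAI} X$ together force $L_X^{\mu}(t) = L_Y^{\mu}(t)$ for every $t>0$. The cleanest route from here is to invoke Theorem 4.1 in both directions. Writing $G_X(t) = \int_0^t \mu_X(u)\,du$ and $G_Y(t) = \int_0^t \mu_Y(u)\,du$, that theorem says $X \leq_{MRLAI} Y$ is equivalent to $G_X/G_Y$ being decreasing, while $Y \leq_{MRLAI} X$ is equivalent to $G_Y/G_X$ being decreasing. Since $\mu_X$ and $\mu_Y$ are positive, $G_X$ and $G_Y$ are strictly positive on $(0,\infty)$, so the two ratios $G_X/G_Y$ and $G_Y/G_X$ are reciprocals of one another; one is decreasing exactly when the other is increasing. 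Requiring both to be decreasing therefore forces each to be constant.

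Once $G_X(t)/G_Y(t) \equiv c$ for some constant $c>0$, I would finish by differentiating $G_X(t) = c\,G_Y(t)$, which yields $\mu_X(t) = c\,\mu_Y(t)$ for all $t>0$ --- precisely the assertion that $X$ and $Y$ have proportional mean residual life. As an alternative to Theorem 4.1, one can argue directly from $L_X^{\mu} = L_Y^{\mu}$: cancelling the common factor $t$ gives $\mu_X(t)/G_X(t) = \mu_Y(t)/G_Y(t)$, that is $(\ln G_X)'(t) = (\ln G_Y)'(t)$, and integrating recovers $G_X = c\,G_Y$.

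I expect the only genuine subtlety --- and hence the main obstacle --- to be the observation that a positive function and its reciprocal cannot both be nonincreasing unless both are constant; this is the step that upgrades the two one-sided order relations into the equality $L_X^{\mu} = L_Y^{\mu}$ and thence into proportionality. Some care is needed to read ``decreasing'' as ``nonincreasing,'' so that the combined constraint yields a genuinely constant ratio, and to record the positivity of $G_X$ and $G_Y$ on $(0,\infty)$, which is what licenses forming the reciprocals and logarithms.
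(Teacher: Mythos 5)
Your proposal is correct. Parts (i) and (ii) are handled exactly as in the paper. For part (iii), the paper argues directly from the forced equality $L_X^{\mu}(t)=L_Y^{\mu}(t)$: it cancels the factor $t$, rewrites the equality as $G_X(t)G_Y'(t)-G_Y(t)G_X'(t)=0$ with $G_X(t)=\int_0^t\mu_X(u)\,du$, divides by $G_Y(t)^2$ to conclude $\bigl(G_X/G_Y\bigr)'\equiv 0$, hence $G_X=c\,G_Y$ and, after differentiating, $\mu_X=c\,\mu_Y$. This is precisely the ``alternative'' route you sketch at the end (your logarithmic-derivative phrasing is the same computation). Your primary route --- invoking Theorem 4.1 in both directions and then observing that a positive function and its reciprocal cannot both be nonincreasing unless constant --- is a genuine, if modest, variant: it buys you a structural explanation of antisymmetry-up-to-proportionality as a corollary of the monotone-ratio characterization, at the cost of relying on Theorem 4.1 (of which you only need the implication the paper actually proves, so this is safe). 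Your explicit attention to the positivity of $G_X$ and $G_Y$ on $(0,\infty)$ and to reading ``decreasing'' as ``nonincreasing'' is a point of care the paper glosses over; either write-up is acceptable.
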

\begin{proof}
    \begin{enumerate}[label=(\roman*)]
        \item It's obvious for all $t>0$, \[L_{X}^{\mu}(t) \leq L_{X}^{\mu}(t) \implies X \leq_{MRLAI}X \] 
        \item If $X \leq_{MRLAI} Y \implies L_{X}^{\mu}(t) \leq L_{Y}^{\mu}(t)  $ and  $Y \leq_{MRLAI} Z \implies L_{Y}^{\mu}(t) \leq L_{Z}^{\mu}(t)  $ . Hence,  $L_{X}^{\mu}(t) \leq  L_{Z}^{\mu}(t) $ then $X \leq_{MRLAI}Z$ .
        \item If $X \leq_{MRLAI} Y \implies L_{X}^{\mu}(t) \leq L_{Y}^{\mu}(t)  $ and  $Y \leq_{MRLAI} X \implies L_{Y}^{\mu}(t) \leq L_{X}^{\mu}(t)  $ . Hence,  $L_{X}^{\mu}(t) =  L_{Y}^{\mu}(t) $ then $\frac{\mu_{X}(t)}{\frac{1}{t}\int_{0}^{t} \mu_{X}(u) du} = \frac{\mu_{Y}(t)}{\frac{1}{t}\int_{0}^{t} \mu_{Y}(u) du}$. Let $G_{X}(t) = \int_{0}^{t} \mu_{X}(u) du $ and $G_{Y}(t) = \int_{0}^{t} \mu_{Y}(u) du $ .\\ Now, $ G_{X}(t).\mu_{Y}(t) - G_{Y}(t).\mu_{X}(t) = 0$ . As, $G_{X}'(t) =\mu_{X}(t) $ then 
       \begin{align*}
& G_{X}(t) \cdot G_{Y}'(t) - G_{Y}(t) \cdot G_{X}'(t) = 0 \\
& \iff \frac{G_{X}(t) \cdot G_{Y}'(t) - G_{Y}(t) \cdot G_{X}'(t)}{(G_{Y}(t))^2} = 0 \\
& \iff h'(t) = 0, \quad h(t) = \frac{G_{X}(t)}{G_{Y}(t)} \\
& \iff h(t) = c \\
& \iff G_{X}(t) = c \, G_{Y}(t) \\
& \iff \int_{0}^{t} \mu_{X}(u) \, du = c \, \int_{0}^{t} \mu_{Y}(u) \, du \\
& \iff \mu_{X}(t) = c \, \mu_{Y}(t)
\end{align*}

        That is, $X$ and $Y$ have proportional mean residual life functions. 
    \end{enumerate}
\end{proof}
The following counterexample shows that MRLAI ordering  does not imply increasing convex ordering.

\begin{example}
    Let $X$ be a random variable having exponential distribution with survival function $\bar{F}_{X}(t) = \exp{-0.5t}, t\geq 0$, and $Y$ be another random variable having Pareto distribution with survival function  $\bar{F}_{Y}(t) = \frac{1}{t^{2}}, t\geq 1$. Then, we have $L_{X}^{\mu}(t) = 1$ and $L_{Y}^{\mu}(t) = 2$ , for all $t > 0$. Clearly, $X \leq_{MRLAI}Y$. Let $ g(t) = \int_{t}^{\infty} \bar{F}_{X}(u) du =2\mathrm{e}^{-\frac{t}{2}} $ and  $ h(t) = \int_{t}^{\infty} \bar{F}_{Y}(u) du = \dfrac{1}{t}$ . If $X \leq_{icx}Y$ then  $g(t) \leq h(t)$, for all $t \geq 0$. But from the values $g(1.5) = 0.9447331, h(1.5) = 0.66, g(0.5) =1.557602  , h(0.5) = 2$, can not conclude that $g(t) \leq h(t) $ for all $t\geq 0$, i.e., $X \centernot \leq_{icx} Y $ . That is,  $X \leq_{MRLAI}Y \centernot \implies X \leq_{icx}Y$ .
\end{example}
\begin{example}
    Let $X$ be a random variable having survival function $\bar{F}_{X}(t) = e^{-2t} $ and $Y$ be another random variable having Pareto distribution with survival function $\bar{F}_{Y}(t) = \frac{1}{t^3}$, for $t\geq 1$ . Then, we have $L_{X}^{\mu}(t) = 1$ and $L_{Y}^{\mu}(t) = 2$ , for all $t \geq 1$. Clearly, $X \leq_{MRLAI}Y$. Let $b(t) = \frac{\int_{t}^{\infty}\int_{u}^{\infty}\bar{F}_{X}(v)dv du }{\int_{t}^{\infty}\int_{v}^{\infty}\bar{F}_{Y}(v)dv du } = \dfrac{t\mathrm{e}^{-2t}}{2}$. If $X \leq_{vrl} Y$, then $b(t)$ must be a decreasing function of $t$, but the values $b(0.20) =  0.067032, b(0.60) = 0.09035826, b(1.0) =0.06766764  $ reveal that $b(t)$ is not a decreasing function of $t$, proving that $X \centernot \leq_{vrl}Y$. Thus $X \leq_{MRLAI}Y  \centernot\implies X  \leq_{vrl}Y.$ 
\end{example}
Since MRLAI does not imply variance residual life ordering, it is obvious that MRLAI ordering does not imply mean residual life ordering. \\
The following counterexample shows that likelihood ratio ordering does not imply MRLAI ordering. 
\begin{example}
    Let $X$ and $Y$ be two independent random variables having Erlang distribution with density functions 
\[f_{X}(t) = 9t\mathrm{e}^{-3t}, t\geq0\]
and 
\[f_{Y}(t) =4te^{-2t} , t\geq 0\]
It is easy to check that $\frac{f_{X}(t)}{f_{Y}(t)}$ decreases with $t \geq 0$. Now,  $k(t) =\int_{0}^{t} \mu_{X}(u) du  = \dfrac{\ln\left(3t+1\right)+3t}{9} $ and $h(t) = \int_{0}^{t} \mu_{X}(u) du  =\dfrac{\ln\left(2t+1\right)+2t}{4} .$ Let $g(t) = \dfrac{k(t)}{h(t)}$, then $g(t)$ is nonmonotone as g(0.1) =  0.6537420, g(1.5) =0.6287006 , g(5) = 0.6371185 . Hence, by  theorem 4.1 , we have $X \centernot \leq_{MRLAI} Y$. That is, \[X\leq_{lr}Y \centernot \implies \leq_{MRLAI} Y.\]

\end{example}

Since likelihood ratio ordering does not imply MRLAI ordering, it can be concluded that non of the other orderings implies MRLAI ordering.\\
Given two random variables, we state a few conditions under which there will be MRLAI ordering between them.
\begin{theorem}
    For two non-negative random variables $X$ and $Y$, if $X$ is decreasing in mean residual life average (DMRLA) and $Y$ is increasing in mean residual life average ( IMRLA) , then $X \leq_{MRLAI}Y $.
\end{theorem}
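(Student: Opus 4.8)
The plan is to reduce the claim to the characterization of the MRLAI order furnished by Theorem 4.1. That theorem states that $X \leq_{MRLAI} Y$ holds if and only if the ratio $\dfrac{\int_{0}^{t} \mu_{X}(u)\,du}{\int_{0}^{t} \mu_{Y}(u)\,du}$ is decreasing in $t>0$. Hence it suffices to derive the monotonicity of this ratio from the DMRLA and IMRLA hypotheses. To this end I would set $G_{X}(t) = \int_{0}^{t} \mu_{X}(u)\,du$ and $G_{Y}(t) = \int_{0}^{t} \mu_{Y}(u)\,du$, so that the goal becomes showing that $G_{X}(t)/G_{Y}(t)$ is decreasing in $t$.

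The next step is to translate the two hypotheses into statements about $G_{X}$ and $G_{Y}$. The mean residual life average is exactly the denominator occurring in the MRLAI function, that is, $\frac{1}{t}\int_{0}^{t}\mu_{X}(u)\,du = G_{X}(t)/t$. Consequently, $X$ being DMRLA means $G_{X}(t)/t$ is decreasing in $t$, and $Y$ being IMRLA means $G_{Y}(t)/t$ is increasing in $t$; both expressions are strictly positive because $\mu_{X}$ and $\mu_{Y}$ are positive.

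The crux is the elementary decomposition
\[ \frac{G_{X}(t)}{G_{Y}(t)} = \frac{G_{X}(t)/t}{G_{Y}(t)/t}, \]
which displays the target ratio as a positive decreasing function divided by a positive increasing function. Such a quotient is decreasing: writing $f = G_{X}(t)/t$ and $g = G_{Y}(t)/t$ one checks $(f/g)' = (f'g - fg')/g^{2} \leq 0$, since $f' \leq 0$, $g' \geq 0$ and $f,g>0$. Therefore $G_{X}(t)/G_{Y}(t)$ is decreasing, and Theorem 4.1 gives $X \leq_{MRLAI} Y$.

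I do not anticipate a genuine obstacle: the whole argument rests on recognizing that the mean residual life average coincides with $G_{X}(t)/t$ and on the clean decomposition above. The only point needing a little care is the monotonicity of the quotient, which is the standard fact that a positive decreasing function over a positive increasing one is decreasing. Should one wish to avoid differentiability assumptions on $\mu_{X}$ and $\mu_{Y}$, the same conclusion follows directly: for $s<t$ the DMRLA and IMRLA properties give $G_{X}(t)/t \leq G_{X}(s)/s$ and $G_{Y}(t)/t \geq G_{Y}(s)/s$, whence $G_{X}(t)/G_{Y}(t) \leq G_{X}(s)/G_{Y}(s)$.
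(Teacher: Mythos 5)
Your proof is correct and follows essentially the same route as the paper: both arguments reduce the claim to the monotonicity of the ratio $\int_{0}^{t}\mu_{X}(u)\,du \big/ \int_{0}^{t}\mu_{Y}(u)\,du$ and then pass to the MRLAI inequality. Your decomposition $G_{X}(t)/G_{Y}(t) = \dfrac{G_{X}(t)/t}{G_{Y}(t)/t}$ in fact supplies the justification for the one step the paper merely asserts, namely that the DMRLA and IMRLA hypotheses force $h(t) = G_{Y}(t)/G_{X}(t)$ to be increasing.
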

\begin{proof}
    Let us denote $G_{X}(t) = \int_{0}^{t} \mu_{X}(u) du $ then $G_{X}'(t) =\mu_{X}(t) $  .\\
    Let $h(t) = \frac{G_{Y}(t)}{G_{X}(t)}$, as $X$ is DMRLA and $Y$ is IMRLA then $h'(t) \geq 0$.\\
Now, \begin{align*}
& h'(t) = \frac{G_{X}(t) \cdot G_{Y}'(t) - G_{Y}(t) \cdot G_{X}'(t)}{(G_{X}(t))^2} \geq 0. \\
& \implies G_{X}(t) \cdot G_{Y}'(t) - G_{Y}(t) \cdot G_{X}'(t) \geq 0 \\
& \implies G_{X}(t) \cdot \mu_{Y}(t) - G_{Y}(t) \cdot \mu_{X}(t) \geq 0 \\
& \implies \frac{\mu_{Y}(t)}{G_{Y}(t)} \geq \frac{\mu_{X}(t)}{G_{X}(t)} \\
& \implies \frac{\mu_{Y}(t)}{\frac{1}{t} \int_{0}^{t} \mu_{Y}(u) \, du} \geq \frac{\mu_{X}(t)}{\frac{1}{t} \int_{0}^{t} \mu_{X}(u) \, du} \\
& \implies L_{Y}^{\mu}(t) \geq L_{X}^{\mu}(t) \\
& \implies X \leq_{\text{MRLAI}} Y
\end{align*}

\end{proof}
Further, we have given a counterexample where $X$ is not DMRLA, but $X \leq_{MRLAI}Y $. This shows that the condition that $X$ is DMRLA and $Y$ is IMRLA are sufficient conditions for $X \leq_{MRLAI}Y $ to hold.
\begin{example}
  Let the random variable $X$ and $Y$ have the mean residual life functions $\mu_{X}(t) = 2(\sqrt{t} + 1), t>  0$ and $\mu_{Y}(t) = t,\hspace{0.4cm} t \geq 1 $. Now, $\dfrac{1}{t}\int_{0}^{t} \mu_{X}(u) du =\dfrac{4\sqrt{t}+6}{3} $ and $\dfrac{1}{t}\int_{1}^{t} \mu_{Y}(u) du = \dfrac{t -1}{2}.$ Also, $L_{{X}}^{\mu}(t) = \frac{3(\sqrt{t} + 1)}{2\sqrt{t} + 3} $ and $L_{{Y}}^{\mu}(t) = 2 $ for all $t>0$.It is easy to check that $ L_{{X}}^{\mu}(t)\leq 1.339134 \hspace{0.5cm}\forall\, t > 0.$ Here both $X$ and $Y$ are IMRLA, but $X \leq_{MRLAI}Y$.  
\end{example}
\begin{theorem}
    If $X$ is  $\downarrow$ in MRL  and $Y$ is $\uparrow$ in MRL then $X \leq_{MRLAI}Y$ order.
\end{theorem}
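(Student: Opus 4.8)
The plan is to exploit the fact that the constant value $1$ acts as a natural separator for the MRLAI function whenever the mean residual life is monotone, so that the two MRLAI functions are automatically pushed to opposite sides of $1$. First I would invoke Theorem 2.3: since $X$ is decreasing in MRL, its MRLAI function satisfies $L_X^{\mu}(t) < 1$ for every $t>0$. Symmetrically, applying Theorem 2.2 to $Y$, which is increasing in MRL, yields $L_Y^{\mu}(t) > 1$ for every $t>0$.

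Combining these two bounds gives $L_X^{\mu}(t) < 1 < L_Y^{\mu}(t)$ for all $t>0$, and in particular $L_X^{\mu}(t) \leq L_Y^{\mu}(t)$. By Definition 4.1 this is precisely the statement $X \leq_{MRLAI} Y$, which completes the argument. No integration, reliability-function representation, or sign analysis of a derivative is needed, since the monotonicity hypotheses have already been converted into the one-sided bounds on $L^{\mu}$ by the earlier theorems.

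I do not expect a genuine obstacle here: the result is essentially an immediate corollary of Theorems 2.2 and 2.3, with the threshold $1$ serving as the transitive bridge between the two MRLAI functions. The only point deserving mild care is the treatment of the boundary (constant) case. If one of the MRL functions were merely constant rather than strictly monotone, Theorem 2.1 would give $L^{\mu}\equiv 1$ for that variable; but then the separation argument still yields the non-strict inequality $L_X^{\mu}(t) \leq L_Y^{\mu}(t)$ required by the order, so the conclusion is unaffected. As an alternative, one could produce a self-contained proof in the spirit of Theorem 4.1 by verifying that $\int_{0}^{t}\mu_X(u)\,du \big/ \int_{0}^{t}\mu_Y(u)\,du$ is decreasing in $t$, but routing through the value $1$ is cleaner and computation-free, so that is the route I would take.
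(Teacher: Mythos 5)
Your proof is correct and follows essentially the same route as the paper's: both invoke the earlier results that a decreasing MRL forces $L_X^{\mu}(t)<1$ and an increasing MRL forces $L_Y^{\mu}(t)>1$, then chain the two bounds through the constant $1$ to conclude $L_X^{\mu}(t)\leq L_Y^{\mu}(t)$, i.e.\ $X\leq_{MRLAI}Y$. Your added remark about the boundary (constant MRL) case is a small refinement the paper does not spell out, but the argument is the same.
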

\begin{proof}
    Suppose $X$ is decreasing in MRL then by result (2.3) $ L_{{X}}^{\mu}(t) < 1 $and if  $Y$ is increasing in MRL then by theorem (2.1)  and (2.2) $ L_{{X}}^{\mu}(t) \geq  1 $, then
    \[L_{{X}}^{\mu}(t) \leq L_{{Y}}^{\mu}(t) \implies X \leq_{MRLAI}Y \]
    \end{proof}

We studied the closure properties of the MRLAI order under various reliability operations. Here, we outline the conditions under which series systems composed of two different sets of components can be arranged in MRLAI order.
\begin{theorem}
    Let ${X_{i}, i = 1,2,\hdots, m},{Y_{j}, j = 1,2,\hdots, n}$ be two set of independent identical random variables such that  $\mu_{X_{i} (t)} = a + bt , a>0, b> 0 , \mu_{Y_{j}(t)} = c + dt , c > 0 , d > 0 $ . If   $X_{i} \leq_{MRLAI}Y_{j} $ for all i,j then 
\[
\begin{vmatrix}
a & b \\
c & d \\
\end{vmatrix} \geq 0\]  
\end{theorem}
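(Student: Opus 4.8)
The plan is to reduce the hypothesis to a single pointwise inequality between two explicitly computable MRLAI functions, and then to read the determinant condition off a short algebraic simplification. Since both mean residual life functions are linear, I would first invoke Theorem 2.5, which gives the MRLAI of any linear MRL $\alpha+\beta t$ in closed form. Applied to the two families this yields
\[
L_{X_i}^{\mu}(t)=\frac{a+bt}{a+\frac{bt}{2}},\qquad L_{Y_j}^{\mu}(t)=\frac{c+dt}{c+\frac{dt}{2}},
\]
for every $i$ and $j$. Because the $X_i$ are identically distributed and the $Y_j$ are identically distributed, the stated requirement ``$X_i\leq_{MRLAI}Y_j$ for all $i,j$'' collapses to the single condition $L_{X}^{\mu}(t)\leq L_{Y}^{\mu}(t)$ for all $t>0$.

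Next I would turn this into a polynomial inequality. Rewriting each denominator as $a+\frac{bt}{2}=\frac{2a+bt}{2}$ (and similarly for the other), and noting that $2a+bt>0$ and $2c+dt>0$ for $t>0$ since $a,b,c,d>0$, the order $L_X^{\mu}(t)\leq L_Y^{\mu}(t)$ is equivalent to
\[
(a+bt)(2c+dt)\leq (c+dt)(2a+bt).
\]
Expanding both sides, the quadratic terms $bd\,t^{2}$ and the constant terms $2ac$ cancel, and the surviving linear terms combine so that the difference of the two sides equals exactly $(ad-bc)\,t$. Hence the MRLAI order is equivalent to $(ad-bc)\,t\geq 0$ for all $t>0$.

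Finally, since $t$ ranges over all positive reals, the inequality $(ad-bc)\,t\geq 0$ forces $ad-bc\geq 0$, which is precisely
\[
\begin{vmatrix} a & b \\ c & d \end{vmatrix}\geq 0.
\]

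I expect the only delicate point to be bookkeeping rather than conceptual difficulty. One must check that the cross-multiplied difference genuinely collapses to the single term $(ad-bc)\,t$, with no spurious $t^{2}$ or constant term surviving, and that the quantifier ``for all $i,j$'' really does reduce to one inequality via the identical-distribution assumption (so that the series-system framing in the preceding discussion does not introduce any additional condition here). Both are routine once the closed forms from Theorem 2.5 are in hand, so there is no serious obstacle beyond careful algebra.
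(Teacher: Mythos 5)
Your proposal is correct and follows essentially the same route as the paper: compute the closed-form MRLAI of a linear mean residual life (the paper does this inline rather than citing Theorem 2.5), cross-multiply, and observe that everything cancels except a term proportional to $ad-bc$ times a positive power of $t$. The only cosmetic difference is that you cancel a factor of $t$ before cross-multiplying, so your residual term is $(ad-bc)t$ where the paper's is $(ad-bc)t^{2}$; the conclusion is identical.
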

\begin{proof}
    Let $X_{i} \leq_{MRLAI} Y_{j} \implies L_{X_{i}}^{\mu}(t) \leq L_{Y_{j}}^{\mu}(t)$. Then 
\[\frac{\mu_{X_{i}}(t)}{\frac{1}{t}\int_{0}^{t} \mu_{X_{i}}(u) du} \leq \frac{\mu_{Y_{j}}(t)}{\frac{1}{t}\int_{0}^{t} \mu_{Y_{j}}(u) du}\] 
\[\frac{a+bt}{\frac{1}{t}\int_{0}^{t} (a + b u) du} \leq \frac{c + d t}{\frac{1}{t}\int_{0}^{t} (c + d u )du}\]
\[(a + bt) ( ct + \frac{dt^2}{2}) \leq (c + dt ) (at + \frac{bt^2}{2})\]
\[(ad - bc ) t^2 \geq 0\]
\[\begin{vmatrix}
a & b \\
c & d \\
\end{vmatrix}
 \geq 0\]
\end{proof} 

 \begin{example}
The MRLAI ordering is not closed under the formation of a parallel system. To show this, let $X_{1}$ and $X_{2}$ be two independent and identically distributed (iid) random variables having Erlang distribution with survival function 
     \[\bar{F}_{X_{i}}(t) = (1 +t)e^{-t}, t\geq 0\]
     Also, let  $Y_{1}$ and $Y_{2}$ be two independent and identically distributed (iid) random variables having exponential distribution with survival function 
    \[\bar{F}_{Y_{i}}(t) = e^{-2t}, t\geq 0\] 
     Then, 
    \[ L_{X_{i}}^{\mu}(t)  = \frac{t(t+2)}{(t+1)(\ln(t+1)+t)}
 < 1\] 
     and $L_{Y_{i}}^{\mu}(t) = 1$, for all $t>0$ and $i , j = 1, 2$.Thus, $X_{i} \leq_{MRLAI}Y_{j} $, for all $i,j = 1,2$.\\ If we write $X = max_{1\leq i \leq 2}X_{i}$ and $Y = max_{1\leq j \leq 2}Y_{j}$ then 
 \[\mu_{X}(t) = \frac{\int_{t}^{\infty}(1-[1 -\bar F_{X}(x)]^{2})dx}{1-[1 - \bar F_{X}(t)]^{2}}  = \dfrac{2\left(4\left(t+2\right)\mathrm{e}^t-t\cdot\left(t+3\right)\right)-5}{4\left(t+1\right)\left(2\mathrm{e}^t-t-1\right)}\]
 and $0.73 < L_{X}^{\mu}(t) < 1$ . Similarly, 
 \[\mu_{Y}(t) = \frac{\int_{t}^{\infty}(1-[1 -\bar F_{Y}(y)]^{2})dy}{1-[1 - \bar F_{Y}(t)]^{2}} =\dfrac{4\mathrm{e}^{2t}-1}{8\mathrm{e}^{2t}-4} \]
 Hence 
 \[L_{Y}^{\mu}(t) = \dfrac{8t\cdot\left(4\mathrm{e}^{2t}-1\right)}{\left(8\mathrm{e}^{2t}-4\right)\left(\ln\left(16\mathrm{e}^{4t}-8\mathrm{e}^{2t}\right)-\ln\left(8\right)\right)}\]
 Here $ 
  0.89 <  L_{Y}^{\mu}(t) <  1$ for all $t>0$. As for $ t = 0.01, L_{X}^{\mu}(0.01) =  0.9981785$ and $ L_{Y}^{\mu}(0.01) = 0.9935494  $  hence for some $t>0$
  \[ max_{1\leq i \leq 2}X_{i} \centernot \leq_{MRLAI} max_{1\leq j \leq 2}Y_{j} .\]
 \end{example}
 The subsequent theorem illustrates that, given exceptionally lenient conditions, the MRLAI order remains unchanged through increasing transformations.
 \begin{theorem}
     Let $X$ and $Y$ be two continuous random variables. If $X\leq_{MRLAI}Y$, then $\phi(X) \leq_{MRLAI}\phi(Y)$, for any strictly increasing continuous function $\phi:\mathcal{R^{+}} \rightarrow \mathcal{R^{+}} $, with $\phi(0) = 0$ and $\phi(x) =ax $ .
     \end{theorem}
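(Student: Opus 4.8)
The plan is to exploit the fact that the stated hypotheses collapse $\phi$ to a pure rescaling. Although $\phi$ is introduced as an arbitrary strictly increasing continuous map, the additional condition $\phi(x)=ax$ (with $\phi(0)=0$, and $\phi:\mathcal{R^{+}}\to\mathcal{R^{+}}$ strictly increasing forcing $a>0$) means we are really dealing with the transformation $Z\mapsto aZ$ for a fixed constant $a>0$. Hence it suffices to prove that $X\leq_{MRLAI}Y$ implies $aX\leq_{MRLAI}aY$.

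First I would record how the mean residual life behaves under scaling. Writing $Z=aX$ and conditioning,
\[
\mu_{aX}(t)=E\!\left[aX-t\mid aX>t\right]=a\,E\!\left[X-\tfrac{t}{a}\ \Big|\ X>\tfrac{t}{a}\right]=a\,\mu_X\!\left(\tfrac{t}{a}\right).
\]
This single identity is the engine of the argument. Substituting it into the definition of the MRLAI function, the numerator is immediately $a\,\mu_X(t/a)$, while in the denominator $\frac1t\int_0^t \mu_{aX}(u)\,du=\frac1t\int_0^t a\,\mu_X(u/a)\,du$ the change of variable $v=u/a$ turns the integral into $a^2\int_0^{t/a}\mu_X(v)\,dv$. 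Cancelling the factors of $a$ and $t$ then gives the clean scaling law
\[
L_{aX}^{\mu}(t)=\frac{a\,\mu_X(t/a)}{\frac{a^2}{t}\int_0^{t/a}\mu_X(v)\,dv}=\frac{\mu_X(t/a)}{\frac{a}{t}\int_0^{t/a}\mu_X(v)\,dv}=L_X^{\mu}\!\left(\tfrac{t}{a}\right),
\]
and identically $L_{aY}^{\mu}(t)=L_Y^{\mu}(t/a)$. In words, rescaling the random variable merely rescales the argument of its MRLAI function.

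Finally I would conclude by a pointwise comparison. Since $X\leq_{MRLAI}Y$ gives $L_X^{\mu}(s)\leq L_Y^{\mu}(s)$ for every $s>0$, and since $s=t/a$ ranges over all of $(0,\infty)$ as $t$ does, we obtain $L_{aX}^{\mu}(t)=L_X^{\mu}(t/a)\leq L_Y^{\mu}(t/a)=L_{aY}^{\mu}(t)$ for all $t>0$, that is $\phi(X)\leq_{MRLAI}\phi(Y)$.

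There is no serious obstacle here; the only point demanding care is the change of variable in the denominator integral, and the conceptual content is simply that the MRLAI functional is scale-invariant up to a rescaling of its argument. I would note in passing that a genuinely nonlinear $\phi$ would require controlling $\mu_{\phi(X)}$, which does not reduce to such a clean rescaling, and this is presumably why the statement restricts to $\phi(x)=ax$.
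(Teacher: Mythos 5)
Your proof is correct, and it reaches the same conclusion by the same underlying mechanism as the paper -- a change of variables exploiting the linearity of $\phi$ -- but your execution is genuinely cleaner and more modular. The paper never isolates a scaling law; instead it rewrites $L_X^{\mu}$ entirely in terms of survival functions via $\mu_X(t)=\int_t^{\infty}\bar F_X(x)\,dx/\bar F_X(t)$, substitutes $\bar F_U(u)=\bar F_X(\phi^{-1}(u))$, and then performs three nested changes of variable (in $z$, $p$, and $w$) inside a single large inequality before specializing to $\frac{d}{dx}\phi^{-1}(x)=\frac1a$ and cancelling. You instead prove the single identity $\mu_{aX}(t)=a\,\mu_X(t/a)$ at the level of the mean residual life and deduce $L_{aX}^{\mu}(t)=L_X^{\mu}(t/a)$, after which the conclusion is a one-line pointwise comparison. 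What your route buys is transparency: the statement ``the MRLAI function is scale-equivariant in its argument'' is the actual content of the theorem, and your version makes that visible, whereas the paper's version buries it. Your version also quietly avoids a loose end in the paper's proof, which closes with a case distinction ``if $a<0$ then $L_X^{\mu}(t)\geq L_Y^{\mu}(t)$'' that is vacuous under the stated hypotheses (a strictly increasing $\phi:\mathcal{R^{+}}\rightarrow\mathcal{R^{+}}$ of the form $\phi(x)=ax$ forces $a>0$, as you note). Your closing remark about why genuinely nonlinear $\phi$ would not reduce to a clean rescaling is also apt: in the paper's general-$\phi$ computation the factors $\frac{d}{dz}\phi^{-1}(z)$ in the numerator and denominator do not cancel unless $\phi^{-1}$ has constant derivative, which is exactly why the hypothesis collapses to $\phi(x)=ax$.
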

     \begin{proof}
      Let $U = \phi(X), F_{U}(u) = P(U \leq u) = P(\phi(X) \leq u) = P(X \leq \phi^{-1}(u)) = F_{X}(\phi^{-1}(u))$ . Then, $\Bar{F}_{U}(u) = \Bar{F}_{X}(\phi^{-1}(u))$. Similarly, let $V = \phi(Y)$, then $\Bar{F}_{V}(u) = \Bar{F}_{Y}(\phi^{-1}(u))$. Let $X\leq_{MRLAI}Y $ then 
      \[ L_{X}^{\mu}(t) \leq L_{Y}^{\mu}(t) \]
      \[\implies \frac{\frac{\int_{t}^{\infty} \Bar{F}_{X}(x)dx}{\Bar{F}_{X}(t)}}{\int_{0}^{t}\left(\frac{\int_{s}^{\infty}\Bar{F}_{X}(x)dx}{\Bar{F}_{X}(s)}\right)ds} \leq \frac{\frac{\int_{t}^{\infty} \Bar{F}_{Y}(x)dx}{\Bar{F}_{Y}(t)}}{\int_{0}^{t}\left(\frac{\int_{s}^{\infty}\Bar{F}_{Y}(x)dx}{\Bar{F}_{Y}(s)}\right)ds}\]
      Let $u = \phi(t) \implies t =\phi^{-1}(u) $ then 
      \[ \frac{\frac{\int_{\phi^{-1}(u)}^{\infty} \Bar{F}_{X}(x)dx}{\Bar{F}_{X}(\phi^{-1}(u))}}{\int_{0}^{\phi^{-1}(u)}\left(\frac{\int_{s}^{\infty}\Bar{F}_{X}(x)dx}{\Bar{F}_{X}(s)}\right)ds} \leq \frac{\frac{\int_{\phi^{-1}(u)}^{\infty} \Bar{F}_{Y}(x)dx}{\Bar{F}_{Y}(\phi^{-1}(u))}}{\int_{0}^{\phi^{-1}(u)}\left(\frac{\int_{s}^{\infty}\Bar{F}_{Y}(x)dx}{\Bar{F}_{Y}(s)}\right)ds}\]
      Let $ x = \phi^{-1}(z) \implies dx = \frac{d}{dz}(\phi^{-1}(z))dz.$ As, $z = \phi(x)$, then\\
      At $x = \phi^{-1}(u)$, then $z = \phi({\phi^{-1}(u)}) = u $ also at $x = \infty, z = \infty $ . 
      
      \[ \dfrac{\frac{\int_{u}^{\infty} \Bar{F}_{X}(\phi^{-1}(z))\frac{d}{dz}(\phi^{-1}(z))dz}{\Bar{F}_{U}(u)}}{\int_{0}^{\phi^{-1}(u)}\left(\frac{\int_{s}^{\infty}\Bar{F}_{X}(x)dx}{\Bar{F}_{X}(s)}\right)ds} \leq \dfrac{\frac{\int_{\phi^{-1}(u)}^{\infty} \Bar{F}_{Y}(\phi^{-1}(z))\frac{d}{dz}(\phi^{-1}(z))dz}{\Bar{F}_{V}(u)}}{\int_{0}^{\phi^{-1}(u)}\left(\frac{\int_{s}^{\infty}\Bar{F}_{Y}(x)dx}{\Bar{F}_{Y}(s)}\right)ds}\]
      \[\implies \dfrac{\frac{\int_{u}^{\infty} \Bar{F}_{U}(z)\frac{d}{dz}(\phi^{-1}(z))dz}{\Bar{F}_{U}(u)}}{\int_{0}^{\phi^{-1}(u)}\left(\frac{\int_{s}^{\infty}\Bar{F}_{X}(x)dx}{\Bar{F}_{X}(s)}\right)ds} \leq \dfrac{\frac{\int_{u}^{\infty} \Bar{F}_{V}(z)\frac{d}{dz}(\phi^{-1}(z))dz}{\Bar{F}_{V}(u)}}{\int_{0}^{\phi^{-1}(u)}\left(\frac{\int_{s}^{\infty}\Bar{F}_{Y}(x)dx}{\Bar{F}_{Y}(s)}\right)ds}\]
       Let $ s = \phi^{-1}(p) \implies ds = \frac{d}{dp}(\phi^{-1}(p))dp.$ As, $p = \phi(s)$, then\\
      At $s = \phi^{-1}(u)$, then $p = \phi({\phi^{-1}(u)}) = u $ also at $s = \infty, p = \infty $ . 
      \[ \dfrac{\frac{\int_{u}^{\infty} \Bar{F}_{U}(z)\frac{d}{dz}(\phi^{-1}(z))dz}{\Bar{F}_{U}(u)}}{\int_{0}^{u}\left(\frac{\int_{\phi^{-1}(p)}^{\infty}\Bar{F}_{X}(x)dx}{\Bar{F}_{X}(\phi^{-1}(p))}\right) \frac{d}{dp}(\phi^{-1}(p))dp} \leq \dfrac{\frac{\int_{u}^{\infty} \Bar{F}_{V}(z)\frac{d}{dz}(\phi^{-1}(z))dz}{\Bar{F}_{V}(u)}}{\int_{0}^{u}\left(\frac{\int_{\phi^{-1}(p)}^{\infty}\Bar{F}_{Y}(x)dx}{\Bar{F}_{Y}(\phi^{-1}(p))}\right) \frac{d}{dp}(\phi^{-1}(p))dp}\]
      Let $ x = \phi^{-1}(w) \implies dx = \frac{d}{dw}(\phi^{-1}(w))dw.$ As, $w = \phi(x)$, then\\
      At $x = \phi^{-1}(p)$, then $w = \phi({\phi^{-1}(p)}) = p $ also at $x = \infty, w = \infty $ .  
    \begin{equation*}
\begin{split}
& \frac{\frac{\int_{u}^{\infty} \Bar{F}_{U}(z)\frac{d}{dz}(\phi^{-1}(z))dz}{\Bar{F}_{U}(u)}}{\int_{0}^{u}\left(\frac{\int_{p}^{\infty}\Bar{F}_{X}(\phi^{-1}(w) )\frac{d}{dw}(\phi^{-1}(w))dw}{\Bar{F}_{U}(p)}\right) \frac{d}{dp}(\phi^{-1}(p))dp} \\
& \quad \leq \frac{\frac{\int_{u}^{\infty} \Bar{F}_{V}(z)\frac{d}{dz}(\phi^{-1}(z))dz}{\Bar{F}_{V}(u)}}{\int_{0}^{u}\left(\frac{\int_{p}^{\infty}\Bar{F}_{Y}(\phi^{-1}(w)) \frac{d}{dw}(\phi^{-1}(w))dw}{\Bar{F}_{V}(p)}\right) \frac{d}{dp}(\phi^{-1}(p))dp}
\end{split}
\end{equation*}
\begin{equation*}
  \begin{aligned}
& \implies \dfrac{\frac{\int_{u}^{\infty} \Bar{F}_{U}(z)\frac{d}{dz}(\phi^{-1}(z))\,dz}{\Bar{F}_{U}(u)}}{\int_{0}^{u}\left(\frac{\int_{p}^{\infty}\Bar{F}_{U}(w)\frac{d}{dw}(\phi^{-1}(w))\,dw}{\Bar{F}_{U}(p)}\right) \frac{d}{dp}(\phi^{-1}(p))\,dp} \\
& \quad \leq \dfrac{\frac{\int_{u}^{\infty} \Bar{F}_{V}(z)\frac{d}{dz}(\phi^{-1}(z))\,dz}{\Bar{F}_{V}(u)}}{\int_{0}^{u}\left(\frac{\int_{p}^{\infty}\Bar{F}_{V}(w) \frac{d}{dw}(\phi^{-1}(w))\,dw}{\Bar{F}_{V}(p)}\right) \frac{d}{dp}(\phi^{-1}(p))\,dp}
\end{aligned}  
\end{equation*}
    
      When $\phi(x) = ax  $ then $\frac{d}{dx}\phi^{-1}(x) = \frac{1}{a}$ then 
   \[ \dfrac{\frac{\int_{u}^{\infty} \Bar{F}_{U}(z)(\frac{1}{a})dz}{\Bar{F}_{U}(u)}}{\int_{0}^{u}\left(\frac{\int_{p}^{\infty}\Bar{F}_{U}(w )(\frac{1}{a})dw}{\Bar{F}_{U}(p)}\right) (\frac{1}{a})dp} \leq \dfrac{\frac{\int_{u}^{\infty} \Bar{F}_{V}(z)(\frac{1}{a})dz}{\Bar{F}_{V}(u)}}{\int_{0}^{u}\left(\frac{\int_{p}^{\infty}\Bar{F}_{V}(w) (\frac{1}{a})dw}{\Bar{F}_{V}(p)}\right) (\frac{1}{a})dp}\] 
    \[ \dfrac{\frac{a \int_{u}^{\infty} \Bar{F}_{U}(z)dz}{\Bar{F}_{U}(u)}}{\int_{0}^{u}\left(\frac{\int_{p}^{\infty}\Bar{F}_{U}(w )dw}{\Bar{F}_{U}(p)}\right) dp} \leq \dfrac{\frac{a \int_{u}^{\infty} \Bar{F}_{V}(z)dz}{\Bar{F}_{V}(u)}}{\int_{0}^{u}\left(\frac{\int_{p}^{\infty}\Bar{F}_{V}(w) dw}{\Bar{F}_{V}(p)}\right) dp}\] 
    If $a > 0$ then $L_{X}^{\mu}(t) \leq L_{Y}^{\mu}(t) $ and if $a < 0 $ then $L_{X}^{\mu}(t) \geq L_{Y}^{\mu}(t) $. 
     
     \end{proof}
 
 \section{Conclusion }
 It is of practical interest to study the ageing behaviour of a system which has already survived up to a specific time $t>0$. In this paper, we have introduced the mean residual life ageing intensity function and studied its monotonic behaviour by comparing the monotonic behaviour of the mean residual life function and AI function.\\
	We have defined two new classes, IMRLAI and DMRLAI, and studied closure properties under different reliability operations, viz., the mixture of distributions, convolution
of distributions, and formation of k-out-of-n systems. We have defined a new mean residual life ageing intensity order and discussed its different properties. \\
In the future, researchers can study the ageing behaviour of a system using residual life functions for various aspects including the variance residual life function, and the r-th mean residual life function.
\bibliographystyle{acm}
\bibliography{bib}
\end{document}